\newcommand{\hide}[1]{}
\newcommand \A {\mathbf{A}}
\newcommand \N {\mathbb{N}}
\newcommand \R {\mathbf{R}}
\newcommand \D {\mathbf{D}}
\newcommand \cF {\mathcal{F}}
\newcommand \cP {\mathcal{P}}
\newcommand \sign{\mathrm{sign}}
\newcommand \Der{{\rm Der}}
\newcommand \SIGN {{\rm SIGN}}
\newtheorem{defn}{Definition}
\newtheorem{lemma}[defn]{Lemma}
\newtheorem{proposition}[defn]{Proposition}
\newtheorem{theorem}[defn]{Theorem}
\newtheorem{corollary}[defn]{Corollary}
\newtheorem{remark}[defn]{Remark}
\newtheorem{notation}[defn]{Notation}
\newenvironment{proof}[1]{
  \trivlist \item[\hskip \labelsep{\it #1}]}{\hfill\mbox{$\square$}
  \endtrivlist}
\title{Elementary recursive quantifier elimination based on \linebreak Thom encoding and sign determination}
\author{Daniel Perrucci$^{\flat}$\thanks{Partially supported by the Argentinian  grants 
PIP 2014-2016 11220130100527CO CONICET and UBACYT 20020120100133.}  \quad  Marie-Fran\c{c}oise Roy$^{\sharp}$ \\[5mm]
{\small ${\flat}$ Departamento de Matem\'atica, FCEN, Universidad de Buenos Aires and IMAS UBA-CONICET,}\\
{\small Ciudad Universitaria, 1428 Buenos Aires, Argentina}\\ 
{\small ${\sharp}$ IRMAR (UMR CNRS 6625), Universit\'e de Rennes 1,} \\
{\small Campus de Beaulieu, 35042 Rennes, cedex,  France}}
\begin{document}

\maketitle

\begin{abstract}
We describe a new quantifier elimination algorithm for real closed fields based on Thom encoding and sign 
determination. The complexity of this algorithm is elementary recursive and its proof of correctness is completely
algebraic. In particular, the notion of connected components of semialgebraic sets is not used. 

\end{abstract}

\bigskip

\noindent  {\small \textbf{Keywords:}  Quantifier Elimination, Real Closed Fields, Thom Encoding, 
Sign Determination.} 

\medskip

\noindent  {\small \textbf{AMS subject classifications:} 14P10, 03C10. } 

\section{Introduction}

The first proofs of quantifier elimination for real closed fields by 
Tarski, Seidenberg, Cohen or H\"ormander (\cite{Tarski51,Seidenberg54,Coh,Hor})
were all providing primitive recursive algorithms. 

The situation changed with the Cylindrical Algebraic Decomposition  method (\cite{Col}) 
and elementary recursive algorithms where obtained (see also \cite{Loj, Mo}).
This method produces a set of sampling points meeting every connected component defined by a sign condition 
on a family of polynomials. 
Cylindrical Algebraic Decomposition,
being based on repeated projections, is in fact doubly exponential in the number of variables 
(see for example \cite[Chapter 11]{BPRbook}).

Single exponential degree bounds, using the critical point method to project in one step a block of variables, 
have been obtained for the existential theory over the reals.
The critical point method also gives a quantifier elimination algorithm which is doubly exponential in 
the number of blocks (\cite{DH, Gri88,GV,R92,BPRQE,BPRbook}).

For all these elementary recursive methods, the proofs of 
correctness of the algorithms are based on geometric properties of semialgebraic sets, such as 
the fact that they have a finite number of connected components. 
They are also valid for general real closed fields, 
where the notion of semialgebraic connectedness has to be used.

Our aim in this paper is
to provide an elementary recursive algorithm for quantifier elimination over real closed fields
(Theorem \ref{quantifier_elimination})
with the particularity that 
its proof of correctness 
is entirely based on algebra and does not involve the notion of connected components 
of semialgebraic sets (see details in Remark \ref{rem:prop_based_on_alg}, Remark \ref{elimalgebraic} and Remark \ref{purelyalgebraic}).

The development of such algebraic proofs is very important in the field 
of constructive algebra. For instance,
the elimination of one variable step 
of the algorithm we present here is,
in the special case of monic polynomials,
 a key step in the construction o
 algebraic identities with elementary recursive degree bounds for the 
Positivstellensatz and Hilbert 17'th problem in  \cite{zerelem}.

Another motivation for the present work is to provide an elementary recursive algorithm
for quantifier elimination over real closed fields, suitable 
for being formally checked by a proof assistant such as \texttt{Coq} \cite{coq} 
using the algebraic nature of its correctness proof. 
Indeed, because of the algebraic nature of its correctness proof, the original proof of Tarski's quantifier elimination 
\cite{Tarski51},
as presented in \cite[Chapter 2]{BPRbook} has already been checked using
\texttt{Coq}  in \cite{Assia_Cyril}.

We start with some notation.

Let $\R$ be a  real closed field.  For $\alpha \in \R$, its \emph{sign} is as usual defined as follows:
$$
{\rm sign} (\alpha) =  \left \{ \begin{array}{rl}
-1 & {\rm if } \ \alpha < 0,\\    
0 & {\rm if } \ \alpha = 0,\\
1 & {\rm if } \ \alpha > 0.\\
       \end{array} \right.
$$
Given a family of polynomials $\cF \subset \R[x_1, \dots, x_k]$, 
a \emph{sign condition} on  $\cF$ is an element 
$\tau$ of $\{-1, 0, 1\}^{\cF}$.
We use the notation 
$$
{\rm sign}(\cF)=\tau
$$ 
to mean
$$
\bigwedge_{Q\in \cF} \left(  {\rm sign}(Q)=\tau(Q)\right).
$$
The \emph{realization} of a sign condition $\tau$ on $\cF$ is defined  as
$$
{\rm Real}(\tau, \R)=\{\upsilon \in \R^k \mid {\rm sign}(\cF(\upsilon))=\tau\}.
$$
If ${\rm Real}(\tau, \R) \ne \emptyset$, we say that $\tau$ is \emph{realizable}. Finally, 
we note by $\SIGN(\cF)$ the set of realizable sign conditions on $\cF$.

For $p \in \mathbb{Z}$, $p\ge 0$, we denote by ${\rm bit}(p)$ the number of binary digits needed to represent $p$. This 
is to say
$$
{\rm bit}(p) = \left\{
\begin{array}{rcl}
1 & \hbox{if} & p = 0, \cr
k &  \hbox{if} & p \ge 1 \hbox{ and } 2^{k-1} \le p < 2^{k} \hbox{ with } k \in \mathbb{Z}.
\end{array}
\right.
$$

Let $\D \subset \R$ be a subring. 
In this paper, given a finite family of polynomials $\cF \subset \D[x_1, \dots, x_k]$, 
we will construct for $1 \le i \le k-1$ 
a new explicit family of polynomials ${\rm Elim}_i(\cF) \subset \D[x_1, \dots, x_i]$ 
which 
is suitable for quantifier elimination
on first order formulas with atoms defined by polynomials in $\cF$.

For organization matters, the definition of the family ${\rm Elim}_i(\cF)$ 
is posponed to 
Definition \ref{def:higher_order_eliminating_families} in Section \ref{sec:main}, and   
we include below our main result, which is Theorem \ref{quantifier_elimination}.
This theorem also states \emph{complexity} bounds for the quantifier elimination method we present. 
Roughly speaking, the complexity is the number of operations in $\D$ that the computation takes; this concept will 
be further explained in Section \ref{sec:preliminaries}.

\begin{theorem}\label{quantifier_elimination}  
Let $\cF \subset 
\D[x_1, \dots, x_k]$ be a finite family of polynomials.
Given a first order formula of type 
$$
{\rm Qu}_{i+1} x_{i+1} \dots {\rm Qu}_{k} x_{k} \, \Phi(x_1, \dots, x_k)
$$
with $1 \le i \le k-1$,  ${\rm Qu}_h \in \{\forall, \exists\}$ for $i + 1 \le h \le k$ and $\Phi(x_1, \dots, x_k)$ a quantifier
free formula with atoms defined by polynomials in $\cF$, 
there exists an equivalent quantifier free formula 
$$
\Psi(x_1, \dots, x_i)
$$ 
with atoms in 
${\rm Elim}_i(\cF)$. 
More precisely, there exists $T_\Phi\subset \SIGN({\rm Elim}_i(\cF))$
so that 
$$\begin{array}{rcl}
\Psi(x_1, \dots, x_{i})& = &
\displaystyle{\bigvee_{\tau \in T_{\Phi}}
\left( \sign({\rm Elim}_{i}(\cF)) = \tau \right). }
\end{array}$$

If $\cF \subset \D[x_1, \dots, x_k]$ is formed by $s$ polynomials of degree bounded by $d$, 
then 
$$
\# {\rm Elim}_{i}(\cF) \le s^{2^{k-i}}\max\{2, d\}^{(16^{k-i}-1){\rm bit}(d)},
$$
the degree of the polynomials in ${\rm Elim}_i(\cF)$ 
is bounded by 
$$
4^{\frac{4^{k-i}-1}{3}}d^{4^{k-i}},
$$
and 
the 
complexity of computing the quantifier free formula
$\Psi$
is 
$$
O \Big(
s^{2^{k}}\max\{2, d\}^{{\rm bit}(d)(16^{k} + (k-1) 4^{k+1})}
\Big)
$$
operations in $\D$.
\end{theorem}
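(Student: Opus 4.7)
The plan is to prove the theorem by induction on the number $k-i$ of quantifiers to be eliminated, with the single-step case providing the heart of the argument and the iteration handling the remainder. The essential ingredient is the following \emph{one-step elimination property} of the family ${\rm Elim}_{m-1}(\cG) \subset \D[x_1,\dots,x_{m-1}]$ attached to $\cG \subset \D[x_1,\dots,x_m]$ (built into Definition \ref{def:higher_order_eliminating_families}): for every $(a_1,\dots,a_{m-1}) \in \R^{m-1}$, the sign condition realized by ${\rm Elim}_{m-1}(\cG)$ at that point uniquely determines the set $\SIGN(\cG(a_1,\dots,a_{m-1},\cdot))$ of realizable sign conditions of the univariate family obtained by specialization. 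This property, whose proof is the technical heart of the paper and rests on Thom encoding of real roots together with univariate sign determination, is assumed to have been established in the preceding sections.

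For the base case $k-i = 1$, consider $\Psi = {\rm Qu}_k\, x_k\, \Phi(x_1,\dots,x_k)$ with $\Phi$ a Boolean combination of sign conditions on $\cF \subset \D[x_1,\dots,x_k]$. For each $\tau \in \SIGN({\rm Elim}_{k-1}(\cF))$, fix any realizer $(a_1,\dots,a_{k-1}) \in {\rm Real}(\tau,\R)$; by the one-step property, the set $S_\tau := \SIGN(\cF(a_1,\dots,a_{k-1},\cdot))$ depends only on $\tau$. Since the truth value of $\Phi$ at any point above $(a_1,\dots,a_{k-1})$ is a function of the sign condition $\sigma \in S_\tau$ realized there, we can decide, purely from $\tau$ and $\Phi$, whether $\Psi$ holds on that fiber: the answer is yes iff some (respectively every) $\sigma \in S_\tau$ makes $\Phi$ true when ${\rm Qu}_k = \exists$ (respectively $\forall$). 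Collecting the passing $\tau$'s into $T_\Phi$ produces the required equivalent quantifier-free formula.

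For the inductive step, first apply the base case to eliminate ${\rm Qu}_k\, x_k$, yielding an equivalent quantifier-free formula $\Psi_1(x_1,\dots,x_{k-1})$ whose atoms lie in ${\rm Elim}_{k-1}(\cF)$; then apply the induction hypothesis to the shorter prefix ${\rm Qu}_{i+1}\, x_{i+1}\, \cdots\, {\rm Qu}_{k-1}\, x_{k-1}\, \Psi_1$ with $\cF$ replaced by the smaller family ${\rm Elim}_{k-1}(\cF) \subset \D[x_1,\dots,x_{k-1}]$. Correctness closes via the iterative compatibility ${\rm Elim}_i(\cF) = {\rm Elim}_i({\rm Elim}_{k-1}(\cF))$ built into Definition \ref{def:higher_order_eliminating_families}. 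The main obstacle is therefore not this rather transparent induction but the underlying one-step faithfulness property itself; everything else is Boolean bookkeeping on the set of sign conditions.

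For the quantitative bounds, let $s_j$ and $d_j$ denote the cardinality and degree bound of ${\rm Elim}_j(\cF)$, with $s_k = s$ and $d_k = d$. The single-step construction supplies recursions of the form $d_{j-1} \le 4\, d_j^4$ and $s_{j-1} \le s_j^2 \cdot \max\{2,d_j\}^{C\, {\rm bit}(d_j)}$ for a small absolute constant $C$, which unfold into the closed forms $d_i \le 4^{(4^{k-i}-1)/3}\, d^{4^{k-i}}$ and $s_i \le s^{2^{k-i}} \max\{2,d\}^{(16^{k-i}-1){\rm bit}(d)}$, using that ${\rm bit}(d_j)$ grows geometrically at rate $4^{k-j}\,{\rm bit}(d)$ while the $2^{j}$ doubling of the $s$-exponent at each step is compensated by a $16^{j}$-factor on the other side. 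The total running time telescopes across the $k-i$ elimination steps together with the final sign determination on ${\rm Elim}_i(\cF)$, is dominated by the largest-family step, and assembles into the stated overall complexity.
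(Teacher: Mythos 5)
Your proposal is correct and follows essentially the same route as the paper: both reduce everything to the one-step faithfulness property (Proposition \ref{thm:elimination_one_variable}), peel off the innermost quantifier first, and close the induction via the iterative definition ${\rm Elim}_{i}(\cF)={\rm Elim}({\rm Elim}_{i+1}(\cF))$; your recursion is merely written top-down (strip ${\rm Qu}_k x_k$, then recurse on the outer prefix over ${\rm Elim}_{k-1}(\cF)$) where the paper's reverse induction on $i$ is written bottom-up. The quantitative recursions $d_{j-1}\le 4d_j^4$ and $s_{j-1}\le 4s_j^2 d_j^{{\rm bit}(d_j)+2}$ you invoke are exactly those of Lemma \ref{lem:degree_and_number_elim} and unfold to the stated closed forms, so the remaining work is only the bookkeeping the paper carries out explicitly (including converting operations in $\D[x_1,\dots,x_i]$ into operations in $\D$, which is where the $(k-1)4^{k+1}$ term in the exponent arises).
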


This paper is organized as follows. In Section \ref{sec:preliminaries} we 
state some preliminaries on 
complexity, Thom encodings, Tarski queries and Sign determination. In Section \ref{sec:projecting_one}, we develop the main
step of our construction, which is the elimination of one variable. 
Finally, in Section \ref{sec:main}, we prove Theorem \ref{quantifier_elimination}.

\section{Preliminaries} \label{sec:preliminaries}

\subsection{Complexity}

The computations we consider in this paper
perform arithmetic operations 
in a subring $\D$ of a real closed field $\R$.
The notion of {\em complexity} of a computation we
consider is the number of arithmetic operations in $\D$ done during the
described procedure.
We consider that sign evaluation in $\D$  is cost free.
We also consider that accessing, reading and writing
pre-computed objects is cost free. For instance, we can access at any
moment for free to any specific
coefficient of a multivariate polynomial or any specific entry of a matrix.
Also, we do not consider the cost of doing arithmetic operations between 
auxiliar numerical quantities (such as cardinalities of sets). In short, we focus on the operations in 
$\D$, which is the natural ambient for our input.

For the complexity of basic algorithms 
for polynomial operations we refer to \cite[Chapter 8]{BPRbook}.
Also, we use 
Berkowitz Algorithm \cite{Berk} as a division free algorithm to compute the determinant of 
a $p \times p$ matrix with entries in a commutative ring $\A$, 
within $O(p^4)$ operations in $\A$.

\subsection{Thom encodings}

We recall now the Thom encoding of real algebraic numbers  \cite{CR} and explain 
its main properties. We refer to \cite[Section 2.1]{BPRbook} for classical proofs and
to \cite[Section 6.1]{zerelem} for
proofs based on algebraic identities coming from Mixed Taylor Formulas.

\begin{defn}\label{def:Thom}
Let $P(y) = \sum_{0 \le h \le p}\gamma_hy^h \in \R[y]$ with $p\ge 1$ and $\gamma_p \ne 0$. 
We denote $\Der(P)$ the list formed by $P$ and the first $p-1$ derivatives of $P$.

Given a real root $\theta$ of $P$, 
the {\em Thom encoding} of $\theta$ with respect to $P$
is the list of signs of 
$\Der(P')$ evaluated at $\theta$.
\end{defn}

Every real root of $P$ is uniquely determined by its Thom encoding with respect to $P$; 
in the sense that two different 
real roots can not have the same Thom encoding.

For convenience we identify sign conditions on $\Der(P')$ (resp. $\Der(P)$), which are
by definition elements in
$\{1,0,-1\}^{\Der(P')}$ (resp. $\{1,0,-1\}^{\Der(P)}$), 
with elements in $\{-1,0,1\}^{\{1,\ldots,p-1\}}$ (resp. $\{-1,0,1\}^{\{0,\ldots,p-1\}}$).
By convention, for any sign condition
$\eta$ on $\Der(P')$ or $\Der(P)$ we extend its definition with $\eta(p)
= \sign(\gamma_p)$.

It is clear that the multiplicity of a real root of $P$ can be deduced from its Thom encoding. Also, 
Thom encodings can 
be used to order real numbers as follows.

\begin{notation}\label{not_order_thom_encoding}
Let $P(y) = \sum_{0 \le h \le p}\gamma_hy^h \in \R[y]$ with $p\ge 1$ and $\gamma_p \ne 0$. 
For $\eta_1, \eta_2$ 
sign conditions on $\Der(P)$, we use the notation $\eta_1 \prec_P \eta_2$
to indicate that $\eta_1 \ne \eta_2$ and, 
if $q$ is the biggest value of $k$ such that $\eta_1(k) \ne \eta_2(k)$, then
\begin{itemize}
 \item 
$\eta_1(q)
< \eta_2(q)
$ and $\eta_1(q+1) = 1$ or
 \item 
$\eta_1(q) > \eta_2(q)$ and $\eta_1(q+1) = -1$.
\end{itemize}
We use the notation $\eta_1 \preceq_P \eta_2$ to indicate
that either $\eta_1 = \eta_2$ or $\eta_1 \prec_P \eta_2$. 
\end{notation}

It is easy to see that $\preceq_P$ defines a partial order on $\{-1, 0, 1\}^{\Der(P)}$.
In addition, $\preceq_P$ defines a total order on $\SIGN(\Der(P))$. 
Indeed, let $\theta_1, \theta_2 \in \R$, $\eta_1 = \sign(\Der(P)(\theta_1))$ and  
$\eta_2 = \sign(\Der(P)(\theta_2))$ with $\eta_1 \ne \eta_2$, and 
let $q$ be as in Notation \ref{not_order_thom_encoding}. 
Note that since $\eta_1(p) = \eta_2(p) = \sign(\gamma_p)$, then $q < p$.
It is not possible that there exists $k$ such that $q < k < p$ 
and $\eta_1(k) = \eta_2(k) = 0$; otherwise
$\theta_1$ and $\theta_2$ would be roots of 
$P^{(k)}$ with the same Thom encoding with respect to this polynomial,
and therefore $\theta_1 = \theta_2$, which is impossible since 
$\eta_1 \ne \eta_ 2$. In particular, we have then that either 
$\eta_1(q+1) = 1$ or $\eta_1(q+1) = -1$ and therefore it is possible
to order $\eta_1$ and $\eta_2$ according to $\preceq_P$.

\begin{proposition}\label{lemma_order_thom_encoding} 
Let $P(y) =\sum_{0 \le h \le p}\gamma_hy^h \in \R[y]$ with $p\ge 1$ and $\gamma_p \ne 0$
and $\theta_1, \theta_2 \in \R$. If 
$\sign(\Der(P)(\theta_1)) \prec_P \sign (\Der(P)(\theta_2))$ 
then $\theta_1 < \theta_2$.
\end{proposition}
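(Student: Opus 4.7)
The plan is to establish that $P^{(q)}$ is strictly monotone on the closed interval between $\theta_1$ and $\theta_2$, and then deduce the ordering of $\theta_1$ and $\theta_2$ from the sign information at $q$. Here $q$ is the largest index where $\eta_1 := \sign(\Der(P)(\theta_1))$ and $\eta_2 := \sign(\Der(P)(\theta_2))$ differ, which exists because $\eta_1 \neq \eta_2$ (and note also that $\theta_1 \neq \theta_2$ since Thom encodings uniquely determine roots, though we do not yet know their order).

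First I would prove, by downward induction on $k$ from $p$ down to $q+1$, the following claim: $P^{(k)}$ has constant nonzero sign equal to $\eta_1(k) = \eta_2(k)$ on the closed interval $I$ between $\theta_1$ and $\theta_2$. The base case $k = p$ is immediate, since $P^{(p)} = p!\gamma_p$ is a nonzero constant of sign $\sign(\gamma_p)$. For the inductive step, the induction hypothesis says $P^{(k+1)}$ has constant nonzero sign on $I$, so $P^{(k)}$ is strictly monotone on $I$. Because the discussion preceding the proposition excludes the possibility $\eta_1(k) = \eta_2(k) = 0$ for $q < k < p$, we have $\eta_1(k) = \eta_2(k) \neq 0$, so $P^{(k)}(\theta_1)$ and $P^{(k)}(\theta_2)$ are two nonzero values of the same sign; combined with strict monotonicity, every value $P^{(k)}(t)$ for $t \in I$ lies between these two and therefore has the same sign.

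Applying the claim at $k = q+1$ yields that $P^{(q+1)}$ has constant sign $\eta_1(q+1) \in \{-1,+1\}$ on $I$, hence $P^{(q)}$ is strictly monotone on $I$ (increasing if $\eta_1(q+1) = 1$, decreasing if $\eta_1(q+1) = -1$). Finally I would split into the two cases from Notation \ref{not_order_thom_encoding}: if $\eta_1(q) < \eta_2(q)$ and $\eta_1(q+1) = 1$, then $P^{(q)}(\theta_1) < P^{(q)}(\theta_2)$ and $P^{(q)}$ is strictly increasing on $I$, forcing $\theta_1 < \theta_2$; symmetrically, if $\eta_1(q) > \eta_2(q)$ and $\eta_1(q+1) = -1$, then $P^{(q)}(\theta_1) > P^{(q)}(\theta_2)$ and $P^{(q)}$ is strictly decreasing on $I$, again giving $\theta_1 < \theta_2$.

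The only delicate point is the inductive claim: one must justify that a strictly monotone function taking values of the same nonzero sign at the two endpoints cannot vanish in between. This is routine from monotonicity, so I expect no serious obstacle. The argument uses only elementary properties of real closed fields (mean-value-style monotonicity from sign of derivative) and the already-established nonvanishing of $\eta_1(k)$ for $q < k \le p$, which is exactly what makes the induction go through.
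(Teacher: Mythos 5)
Your proof is correct, and it is essentially the standard argument (the induction on successive derivatives behind Thom's lemma) that the paper omits, deferring instead to the cited references; the preceding discussion in the paper already supplies the key fact you invoke, namely that $\eta_1(k)=\eta_2(k)\neq 0$ for $q<k<p$. The only ingredient beyond that is the mean-value-type statement that a polynomial whose derivative has constant nonzero sign on an interval of $\R$ is strictly monotone there, which is a standard fact over real closed fields, so there is no gap.
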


\subsection{Tarski queries}

Let $P, Q \in \R[y]$ with $P \not \equiv 0$.
The {\rm Tarski-query} of $Q$ for
$P$  
is 
$$
\begin{array}{rcl}
{\rm TaQu} (Q;P) & = & \displaystyle \sum_{\theta \in \R \, | \, P(\theta) = 0}
   {\rm sign} (Q(\theta)) \\[7mm]
& = & \# \left\{ \theta \in \R \mid P(\theta) = 0, \,
   Q(\theta) > 0 \right\} - \#  \left\{ \theta  \in \R \mid P(\theta ) = 0, \,
   Q(\theta) < 0 \right\}. 
\end{array}
$$

There are several methods to compute the Tarski-query of $Q$ for $P$.
Here, we describe one which is well adapted to the parametric case.

\begin{defn}[Hermite's Matrix]\label{def:her_mat}
Let  $P, Q \in \R[y]$ 
with $\deg P = p \ge 1$. 
The Hermite's matrix ${\rm Her}(P;Q) \in \R^{p \times p}$
is the matrix defined for $1 \le j_1, j_2 \le p$ by 
$$
{\rm Her}(P;Q)_{j_1, j_2} = 
{\rm Tra}
(Q(y) y^{j_1 + j_2 - 2})
$$
where  ${\rm Tra}(A(y)
)$ is the trace of the linear mapping of multiplication by $A(y)
\in \R[y]$
in the $\R$-vector space $\R[y]/P(y)$.
\end{defn}

\begin{remark}\label{rem:degree_her_mat}
Let  $P(y) = \sum_{0 \le h \le p} \gamma_hy^h, 
Q = \sum_{0 \le h \le q} \gamma'_hy^h \in \R[y]$ 
with $p\ge 1$ and $\gamma_p \ne 0$.

For $j \in \N$ we denote by ${\rm A}_{p,j} \in \mathbb{Z}[c_0, \dots, c_{p-1}]$ the unique 
polynomial 
such that 
$${\rm A}_{p,j}(s_p(y_1, \dots, y_p), \dots, s_1(y_1, \dots ,y_p)) = 
\sum_{1 \le k \le p}y_k^j \in \mathbb{Z}[y_1, \dots, y_p],$$
where 
for $1 \le j \le p$, $s_j(y_1, \dots, y_p)$ is the $j$-th elementary symmetric function evaluated in $y_1, \dots, y_p$. 
Note that  $\deg {\rm A}_{ p, j} = j$ (see \cite[Proof of Theorem 3, Chapter 7]{CLO}).  

Then we have that for  $1 \le j_1, j_2 \le p$, 
$$
 {\rm Her}(P;Q)_{j_1, j_2}
= 
\sum_{0 \le h \le q} \gamma'_h {\rm A}_{p,h + j_1 + j_2 - 2}
\left((-1)^p\frac{\gamma_0}{\gamma_p}, \dots, -\frac{\gamma_{p-1}}{\gamma_p}\right)
$$
(see \cite[Section 4.3]{BPRbook}); therefore 
$$
\gamma_p^{q + 2p - 2}{\rm Her}(P;Q)_{j_1, j_2} 
$$
is a polynomial in the coefficients of $P$ and $Q$ with 
degree $q + 2p - 2$ with respect to the coefficients of $P$
and degree $1$ with respect to the coefficients of $Q$.
\end{remark}

\begin{theorem}[Hermite's Theory (1)]
\label{hermite} Let $P, Q
\in \R[y]$ with $\deg P = p \ge 1$. Then
$$
{\rm Si}({\rm Her}(P;Q))  = {\rm TaQu} (Q;P)
$$
where ${\rm Si}({\rm Her}(P;Q))$ is the signature of the symmetric matrix ${\rm Her}(P;Q)$. 
\end{theorem}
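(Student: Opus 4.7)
The plan is to analyze the Hermite bilinear form $H(f,g) = {\rm Tra}(Qfg)$ on $V := \R[y]/(P)$ by using the Chinese Remainder Theorem to split $V$ according to the factorization of $P$ over $\R$. Writing $\theta_1,\ldots,\theta_r$ for the distinct real roots of $P$ with multiplicities $\mu_1,\ldots,\mu_r$ and $\{\alpha_l,\bar\alpha_l\}_{l=1}^s$ for the conjugate pairs of non-real roots in $\R[i]$ with multiplicities $\nu_1,\ldots,\nu_s$, one obtains
\[
V\ \cong\ \bigoplus_{k=1}^r \R[y]/((y-\theta_k)^{\mu_k})\ \oplus\ \bigoplus_{l=1}^s \R[y]/(((y-\alpha_l)(y-\bar\alpha_l))^{\nu_l}).
\]
Each summand is an ideal whose product with any other summand is $0$, so for $f,g$ in distinct summands $fg=0$ and hence $H(f,g)=0$. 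Therefore $H$ is block-diagonal with respect to this decomposition and ${\rm Si}(H)$ is the sum of the signatures of its restrictions to the blocks.

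The second ingredient is a trace formula for local Artinian algebras: for $A = K[y]/((y-\zeta)^\mu)$, in the basis $1,(y-\zeta),\ldots,(y-\zeta)^{\mu-1}$ multiplication by $(y-\zeta)$ is strictly upper-triangular, so multiplication by any $h\in A$ has the same trace as multiplication by its constant coefficient $h(\zeta)\in K$, giving ${\rm Tra}_{A/K}(h) = \mu\, h(\zeta)$. On the real block $V^{(k)}:=\R[y]/((y-\theta_k)^{\mu_k})$ this applies directly with $K=\R$ and produces $H(f,g) = \mu_k\, Q(\theta_k)\, f(\theta_k)\, g(\theta_k)$, a rank-$\leq 1$ form whose signature is $\sign(Q(\theta_k))$. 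For the complex-pair block $V^{(l)}:=\R[y]/(((y-\alpha_l)(y-\bar\alpha_l))^{\nu_l})$, scalar extension yields $V^{(l)}\otimes_\R\R[i]\cong \R[i][y]/((y-\alpha_l)^{\nu_l})\oplus \R[i][y]/((y-\bar\alpha_l)^{\nu_l})$; using invariance of the trace under base change, for $f,g$ with real coefficients,
\[
H(f,g)\ =\ \nu_l Q(\alpha_l)f(\alpha_l)g(\alpha_l) + \nu_l Q(\bar\alpha_l)f(\bar\alpha_l)g(\bar\alpha_l)\ =\ 2\nu_l\,\Re\bigl[Q(\alpha_l)f(\alpha_l)g(\alpha_l)\bigr].
\]
This form factors through the surjective $\R$-linear map $\pi\colon V^{(l)}\to\R^2$, $f\mapsto(\Re f(\alpha_l),\Im f(\alpha_l))$, on which, writing $Q(\alpha_l)=a+ib$, the induced form has matrix $2\nu_l\left(\begin{smallmatrix}a & -b\\ -b & -a\end{smallmatrix}\right)$ of determinant $-4\nu_l^2(a^2+b^2)\leq 0$, hence signature $0$. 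Summing the block contributions therefore gives ${\rm Si}({\rm Her}(P;Q)) = \sum_k \sign(Q(\theta_k)) = {\rm TaQu}(Q;P)$.

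The step I expect to be the main obstacle is obtaining the local trace formula ${\rm Tra}_{A/K}(h) = \mu\, h(\zeta)$ with a clean algebraic argument; the basis-triangulation sketched above works, but an equivalent global derivation would establish ${\rm Tra}_{\R[y]/(P)}(y^j) = \sum_\zeta \mu_\zeta \zeta^j$ via Newton's identities applied to the power sums of the roots of $P$. A related subtlety is the compatibility of the trace under the base change $\R\to\R[i]$, which is needed to process the complex-pair blocks.
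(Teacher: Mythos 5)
Your proof is correct, and it is essentially the standard argument that the paper invokes by citation (the paper gives no proof of its own, referring to \cite[Theorem 4.58]{BPRbook}, whose proof likewise diagonalizes the trace form over the roots of $P$ and pairs off the conjugate complex contributions). Your organization via the Chinese Remainder Theorem plus the local trace formula ${\rm Tra}(h)=\mu\,h(\zeta)$ is a clean and complete rendering of that argument, valid over any real closed field since $\R[i]$ is algebraically closed.
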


\begin{proof}{Proof:} See \cite[Theorem 4.58]{BPRbook} or \cite[Section 5.1]{zerelem} for a 
proof based on algebraic identities. 
\end{proof}

A nice property of the Hermite's matrix 
is that its signature can always be computed from the sign of its principal minors
(property which is not extensive to general symmetric matrices, or even Hankel matrices, as shown for instance
by the matrices $ \left(\begin{array}{cc}
                  0 & 0 \cr 0 & 0
                 \end{array}\right)$ and $\left(\begin{array}{cc}
                  0 & 0 \cr 0 & 1
                 \end{array}\right)$, having same principal minors and different signatures).

\begin{notation}\label{not:Hermite_minors}
Let $P, Q \in \R[y]$ with $\deg P = p \ge 1$.
For $0 \le j \le p-1$, we denote by
${\rm hmi}_j(P;Q)$  the $(p-j)$-th principal minor of ${\rm Her}(P;Q)$. 
We extend this definition with ${\rm hmi}_{p}(P;Q) = 1$. 
We denote by   
${\rm hmi}(P;Q)$ the list 
$$ 
[{\rm hmi}_{0}(P;Q), \dots, {\rm hmi}_{p-1}(P;Q),1] \subset \R.
$$ 
\hide{
and by ${\rm hmi}_+(P;Q)$ the list ${\rm hmi}(P;Q)$ extended with ${\rm hmi}_{p}(P;Q) =1$.
}
\end{notation}

We also consider the following notation. 

\begin{notation}
\label{notation:testcoeff}
\begin{itemize}
\item For $k \in \N, \varepsilon_k = (-1)^{k(k-1)/2}$.

\item Let $h=h_0,\ldots,h_p$ be a finite list
in $\R$ such that $h_p \not=0$.
We 
denote by  $(d_0,\ldots,d_s)$ the strictly
decreasing sequence  of natural numbers  
defined by 
$\{d_0, \dots, d_s\} = \{ j \ | \ 0 \le j \le p, \, h_j \ne 0\}$.  
We define 
$$ 
{\rm PmV}(h)= 
\sum_{1 \le i \le s, \atop d_{i-1}-d_i \, \hbox{\scriptsize
 {odd}}} 
\varepsilon_{d_{i-1}-d_{i}} 
\sign(h_{d_{i-1}})\sign(h_{d_i}). 
$$
\end{itemize}

\end{notation}

Note that in Notation \ref{notation:testcoeff} it is always the case that $d_0 = p$. 
Also, when all elements of $h$ are non-zero,
${\rm PmV}(h)$
 is the difference
between the number of sign permanencies and
the number of sign changes in
$h_p,\ldots,h_0$.

\begin{theorem}[Hermite's Theory (2)]
\label{bezoutiansignature}
Let $P, Q \in \R[y]$ with $\deg P = p \ge 1$, 
Then 
$$
{\rm Si}({\rm Her}(P;Q))  = {\rm PmV}({\rm hmi}
(P; Q)).
$$
\end{theorem}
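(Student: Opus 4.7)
Since Theorem \ref{hermite} already identifies $\mathrm{Si}(\mathrm{Her}(P;Q))$ with $\mathrm{TaQu}(Q;P)$, proving the theorem is equivalent to proving the purely linear-algebraic statement $\mathrm{Si}(\mathrm{Her}(P;Q)) = \mathrm{PmV}(\mathrm{hmi}(P;Q))$ for the symmetric matrix $M = \mathrm{Her}(P;Q)$. The essential observation is that $M$ is a \emph{Hankel} matrix: the entry $M_{j_1,j_2} = \mathrm{Tra}(Q(y)y^{j_1+j_2-2})$ depends only on $j_1+j_2$, so $M$ is the Hankel matrix built from the $Q$-weighted Newton sums of the roots of $P$ (counted with multiplicity in an algebraic closure). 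I would therefore aim to prove Theorem \ref{bezoutiansignature} as a special case of a general statement: for any Hankel symmetric matrix, the signature is computed by $\mathrm{PmV}$ applied to the sequence of its leading principal minors augmented with $1$.

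The plan is to treat first the generic case where every entry of $\mathrm{hmi}(P;Q)$ is nonzero. Here the classical Jacobi/Sylvester theorem applies: the signature equals the number of sign permanencies minus sign changes in the sequence of leading principal minors. In this situation, every pair $(d_{i-1},d_i)$ in Notation \ref{notation:testcoeff} satisfies $d_{i-1}-d_i=1$, and since $\varepsilon_1=1$, $\mathrm{PmV}(\mathrm{hmi}(P;Q))$ coincides exactly with this permanency-minus-change count. The non-generic case would be handled by a perturbation $Q_\varepsilon = Q + \varepsilon R$ chosen so that all principal minors of $\mathrm{Her}(P;Q_\varepsilon)$ are nonzero for small $\varepsilon>0$, applying the generic case to $Q_\varepsilon$, and passing to the limit: the signature of $\mathrm{Her}(P;Q_\varepsilon)$ stabilizes as $\varepsilon \to 0$ and the non-vanishing entries of $\mathrm{hmi}(P;Q_\varepsilon)$ converge to those of $\mathrm{hmi}(P;Q)$ with the same signs.

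What remains, and is the main obstacle, is to show that inside each ``gap'' $d_i < j < d_{i-1}$ the signs of the perturbed minors contribute exactly $\varepsilon_{d_{i-1}-d_i}\sign(h_{d_{i-1}})\sign(h_{d_i})$ when $d_{i-1}-d_i$ is odd, and $0$ when it is even. This is where the Hankel structure is indispensable: the classical Frobenius rank identities for Hankel matrices force the sign pattern across a block of vanishing principal minors to be rigid, with the alternating factor $\varepsilon_k=(-1)^{k(k-1)/2}$ arising from the determinants of the bordering Hankel submatrices. This identification could be carried out by a Schur-complement reduction to a strictly smaller Hankel matrix, using the Sylvester--Habicht subresultant identities, which fits the algebraic-identity framework of \cite{zerelem} that the paper relies on. Combining the generic case with this gap-by-gap analysis of the perturbation yields the equality $\mathrm{Si}(\mathrm{Her}(P;Q))=\mathrm{PmV}(\mathrm{hmi}(P;Q))$.
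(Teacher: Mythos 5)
The paper does not actually prove this theorem: it cites \cite[Theorem 4.33, Proposition 4.55 and Lemma 9.26]{BPRbook} and \cite[Section 5.2]{zerelem}, where the degenerate case is handled by a direct structural argument on Hankel matrices (Frobenius's rank rule) or via signed subresultants, not by perturbation. Your generic case is correct: when every entry of ${\rm hmi}(P;Q)$ is nonzero, each gap has length $1$, $\varepsilon_1=1$, and ${\rm PmV}$ reduces to permanences minus variations of the leading principal minors, which is Jacobi's theorem.

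The genuine gap is in the perturbation step. The signature of a symmetric matrix is not continuous at matrices where the rank drops; and the only case in which the theorem says more than Jacobi's theorem is precisely when some ${\rm hmi}_j(P;Q)$ vanish, which (for $j=0$) includes the case where ${\rm Her}(P;Q)$ is singular. A perturbation $Q_\varepsilon$ making all principal minors nonzero makes ${\rm Her}(P;Q_\varepsilon)$ nonsingular, and the eigenvalues tending to $0$ as $\varepsilon\to 0$ each contribute $\pm1$ to ${\rm Si}({\rm Her}(P;Q_\varepsilon))$ but $0$ to ${\rm Si}({\rm Her}(P;Q))$. So the claim that ``the signature of ${\rm Her}(P;Q_\varepsilon)$ stabilizes'' to ${\rm Si}({\rm Her}(P;Q))$ is false in general: take $p=1$, $P$ with a single real root $\theta$ and $Q(\theta)=0$; then ${\rm Her}(P;Q)=(0)$ has signature $0$ while ${\rm Her}(P;Q_\varepsilon)=(\varepsilon R(\theta))$ has signature $\pm1$ for all $\varepsilon>0$ (and indeed ${\rm TaQu}(Q_\varepsilon;P)=\pm1$ does not converge to ${\rm TaQu}(Q;P)=0$). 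Your gap-by-gap analysis concerns the signs of the \emph{perturbed} minors, but even complete knowledge of those signs cannot bridge this discontinuity; what is needed is a statement tying the signature of the \emph{unperturbed}, possibly singular, form to its minors, which is exactly the content of the Frobenius/Hankel lemma proved by a rank-and-decomposition induction in \cite[Lemma 9.26]{BPRbook}. A rank-preserving perturbation would keep the signature locally constant, but then the matrix stays singular and no perturbation can make all leading principal minors nonzero, so the generic case cannot be invoked. You should either carry out the direct Hankel induction or compute both sides independently as a Cauchy index via signed subresultants as in \cite[Section 5.2]{zerelem}.
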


\begin{proof}{Proof:}
See \cite[Theorem 4.33, Proposition 4.55 and Lemma 9.26]{BPRbook}
or \cite[Section 5.2]{zerelem} for a 
proof based on algebraic identities. 
\end{proof}

\subsection{Sign determination}

Consider now $P \in \R[y]$ and  $\mathcal{P} =P_{1} , \ldots ,P_{s}$, a finite list of polynomials in
$\R[y]$.
Let $\sigma$ be a sign condition on
$\mathcal{P}$. 
The cardinality of 
$$
\{\theta  \in \R \ | \ P(\theta) = 0, \, \sign(\cP(\theta)) = \sigma \}
$$
is denoted by $c(\sigma, \{\theta \in \R \ | \ P(\theta) = 0\})$
or simply by $c(\sigma)$ if the polynomial $P$ is fixed and clear from the context. 
Note that if
$$
\{\theta  \in \R \ | \ P(\theta) = 0, \, \sign(\cP(\theta)) = \sigma \}=\emptyset,
$$
then $c(\sigma)=0$.

The (univariate) \emph{Sign Determination problem} is to determine $c(\sigma)$ for every sign condition
$\sigma$ on $\cP$.  
It is a basic algorithmic problem for real numbers which has been studied extensively
(see for example \cite{Tarski51,BKR,Canny93b,BPRbook}).

There is a very close relation between the sign determination problem and Tarski queries.

\begin{proposition}\label{prop:enough_TQ}
Let $P \in \R[y]$ with $\deg P = p \ge 1$  and  $\mathcal{P} =P_{1} , \ldots ,P_{s}$  a finite list of polynomials in
$\R[y]$. 
The list of all Tarski-queries 
$$
\left[{\rm TaQu} (Q;P), Q \in  
\Big\{ \displaystyle{\prod_{1\le h \le s} P_h^{\alpha_h}}  
\ | \ 
(\alpha_1, \dots, \alpha_s)
\in {\{0,1,2\}} ^{\{1,\ldots,s\}}, \, \#\{h  \ | \ \alpha_h \ne 0\}\le {\rm bit}(p) \Big\}\right]
$$
determines  the cardinality $c(\sigma)$ for every sign condition $\sigma$ on $\cP$ at the roots of $P$.
\end{proposition}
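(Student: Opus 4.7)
The plan is to exhibit an incremental Ben--Or--Kozen--Reif style sign determination algorithm that recovers every $c(\sigma)$ from the listed Tarski queries, exploiting the fact that at most $p$ sign conditions are realizable (since $P$ has at most $p$ real roots). The starting identity is
$$
{\rm TaQu}\!\left(\prod_{h} P_h^{\alpha_h};P\right) \;=\; \sum_{\sigma\in\{-1,0,1\}^{s}} \chi_\alpha(\sigma)\, c(\sigma),\qquad \chi_\alpha(\sigma) := \prod_{h}\sigma(h)^{\alpha_h},
$$
using the conventions $0^0=1$ and $0^k=0$ for $k\ge 1$. As $\alpha$ ranges over $\{0,1,2\}^s$ and $\sigma$ over $\{-1,0,1\}^s$, the coefficient matrix is the $s$-fold Kronecker power of the invertible $3\times 3$ matrix $M=\bigl(\begin{smallmatrix}1&1&1\\-1&0&1\\1&0&1\end{smallmatrix}\bigr)$, so the full set of $3^s$ Tarski queries trivially determines $c$. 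The task is to do the same using only the queries whose exponent vector has at most ${\rm bit}(p)$ nonzero entries.

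I would process $P_1,\ldots,P_s$ one at a time, maintaining at step $k$ the realizable sign conditions $\Sigma_k\subset\{-1,0,1\}^k$ with their cardinalities $c_k$, together with a set of \emph{adapted exponents} $A_k\subset\{0,1,2\}^k$ of size $|\Sigma_k|$ such that $(\chi_\alpha(\sigma))_{\alpha\in A_k,\sigma\in\Sigma_k}$ is invertible and every $\alpha\in A_k$ satisfies $\#\{h:\alpha_h\ne 0\}\le {\rm bit}(|\Sigma_k|)$. Starting from $\Sigma_0=A_0=\{\emptyset\}$ with $c_0(\emptyset)={\rm TaQu}(1;P)$, the inductive step would form the $3|\Sigma_k|$ candidate extensions $\Sigma_k\times\{-1,0,1\}$ and the trial exponents $A_k\times\{0,1,2\}$: their coefficient matrix is the Kronecker product of the current one with $M$, hence invertible, so the $3|A_k|$ Tarski queries associated to $A_k\times\{0,1,2\}$ determine all candidate cardinalities. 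Discarding the zero ones gives $\Sigma_{k+1}$ and $c_{k+1}$, and a subset $A_{k+1}\subset A_k\times\{0,1,2\}$ of size $|\Sigma_{k+1}|$ preserving invertibility is extracted by selecting linearly independent rows of the extended matrix restricted to the columns indexed by $\Sigma_{k+1}$.

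The main obstacle, and the crux of the proof, will be showing that $A_{k+1}$ can always be chosen to preserve the support bound $\#\{h:\alpha_h\ne 0\}\le{\rm bit}(|\Sigma_{k+1}|)$. My approach here is combinatorial: among the three extensions $(\alpha,0),(\alpha,1),(\alpha,2)$ of $\alpha\in A_k$, only the last two raise the nonzero-entry count, so $A_{k+1}$ should be built preferring the zero-extension whenever invertibility allows it, and only paying the extra nonzero coordinate at $\sigma\in\Sigma_k$ having two or three extensions in $\Sigma_{k+1}$. A rank-preservation argument on the Kronecker product with $M$, together with the observation that $|\Sigma_{k+1}|\ge 2|\Sigma_k|$ whenever many new rows must be taken with nonzero $(k+1)$-th coordinate (which forces ${\rm bit}(|\Sigma_{k+1}|)\ge{\rm bit}(|\Sigma_k|)+1$), should show that this balanced selection is always possible. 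Since $|\Sigma_k|\le p$ at every stage, every $\alpha$ ever consulted by the algorithm has support at most ${\rm bit}(p)$, so only Tarski queries from the stated list are used; these therefore determine every $c(\sigma)$.
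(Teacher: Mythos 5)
Your overall strategy --- incremental Ben--Or--Kozen--Reif sign determination via adapted exponent sets --- is exactly the machinery the paper relies on (its own proof simply invokes Algorithm 10.11 and Proposition 10.74 of Basu--Pollack--Roy), so the skeleton is right: the identity ${\rm TaQu}(\prod_h P_h^{\alpha_h};P)=\sum_\sigma \chi_\alpha(\sigma)\,c(\sigma)$, the Kronecker structure with the invertible $3\times 3$ matrix, and the reduction to showing that an adapted exponent set of support at most ${\rm bit}(p)$ always exists. The problem is that this last point --- which you yourself identify as the crux, and which is the entire mathematical content of the proposition --- is not actually proved, and the inductive invariant you maintain is too weak to prove it. You only record that every $\alpha\in A_k$ has support at most ${\rm bit}(\#\Sigma_k)$. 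Any row of $A_{k+1}$ of the form $(\alpha,1)$ or $(\alpha,2)$ with $\alpha\in A_k$ then only inherits the bound ${\rm bit}(\#\Sigma_k)+1$, and for this to be at most ${\rm bit}(\#\Sigma_{k+1})$ you would need $\#\Sigma_{k+1}\ge 2^{{\rm bit}(\#\Sigma_k)}$, which can fail: for instance $\#\Sigma_k=5$ and $\#\Sigma_{k+1}=6$ (a single condition splits in two) gives ${\rm bit}(5)={\rm bit}(6)=3$, yet the one required row $(\alpha,1)$ could have support $4$ if the relevant $\alpha\in A_k$ has support $3$. Your stated inequality ``$\#\Sigma_{k+1}\ge 2\#\Sigma_k$ whenever many new rows must be taken'' is aimed at the wrong quantity and is not available.

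What closes the induction (and is precisely the content of the cited Proposition 10.74) is a strengthened invariant: one defines an adapted set ${\rm Ada}(\Sigma)$ for \emph{every} family $\Sigma$ of sign conditions, monotone in $\Sigma$ and with support bounded by ${\rm bit}(\#\Sigma)$, and builds $A_{k+1}$ as ${\rm Ada}(\Sigma_k^{(1)})\times\{0\}\cup{\rm Ada}(\Sigma_k^{(2)})\times\{1\}\cup{\rm Ada}(\Sigma_k^{(3)})\times\{2\}$, where $\Sigma_k^{(i)}$ is the set of $\sigma\in\Sigma_k$ having at least $i$ extensions in $\Sigma_{k+1}$ (so $\Sigma_k^{(1)}=\Sigma_k$ and $\#\Sigma_{k+1}=\#\Sigma_k^{(1)}+\#\Sigma_k^{(2)}+\#\Sigma_k^{(3)}$). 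The exponents paired with a nonzero last coordinate are adapted to the \emph{smaller} families $\Sigma_k^{(2)}\supset\Sigma_k^{(3)}$ and carry the bound ${\rm bit}(\#\Sigma_k^{(2)})$, and the counting $\#\Sigma_{k+1}\ge 2\,\#\Sigma_k^{(2)}\ge 2\,\#\Sigma_k^{(3)}$ yields ${\rm bit}(\#\Sigma_k^{(2)})+1\le{\rm bit}(\#\Sigma_{k+1})$; one must also verify that this block construction stays invertible. Your greedy ``prefer the zero-extension, then extract independent rows'' selection does not supply these refined support bounds, so the proof as written has a genuine gap at its decisive step; you should either prove the strengthened statement or cite it.
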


\begin{proof}{Proof:}
The sign determination procedure described in 
\cite[Algorithm 10.11]{BPRbook} proceeds in $s$ steps as follows: for $i = 1, \dots, s$, 
at step $i$, the cardinality $c(\sigma)$ for every 
sign condition $\sigma$ on $P_1, \dots, P_i$ is computed. 
In order to do so, at each step, first several Tarski queries 
${\rm TaQu} (Q;P)$ are calculated, and then an invertible linear system with entries in $\mathbb{Z}$ is solved. 
By \cite[Proposition 10.74]{BPRbook},
every 
polynomial $Q$ such that 
${\rm TaQu} (Q;P)$ 
is  
calculated along the execution of the algorithm, 
is a product of at most ${\rm bit}(p)$
of the polynomials $P_1, \dots, P_s$ each of them raised to the power $1$ or $2$. 
Therefore, 
once all
 Tarski-queries ${\rm TaQu} (Q;P)$ with 
$$
Q \in  
\Big\{ \displaystyle{\prod_{1\le h \le s} P_h^{\alpha_h}}  
\ | \ 
 (\alpha_1, \dots, \alpha_s)
\in {\{0,1,2\}} ^{\{1,\ldots,s\}}, \, \#\{h  \ | \ \alpha_h \ne 0\}\le {\rm bit}(p) \Big\}
$$
are 
known,
 the output of the algorithm, which is
the cardinality $c(\sigma)$ for every  sign condition $\sigma$ on $P_1, \dots, P_s$ 
at the zeroes of $P$, is  determined.
\end{proof}

As it was said before, in this paper
we do not consider the cost of doing arithmetic operations between 
auxiliar numerical quantities (such as cardinalities of sets).
Nevertheless, 
we refer to \cite[Section 10.3]{BPR3} for details on specific methods to solve the integer linear systems 
involved in the sign determination algorithm cited in the proof of Proposition 
\ref{prop:enough_TQ}, as well as bounds on its bit complexity.

\begin{remark}
\label{cor:thom_enc_sign}
Given $P, Q \in \R[y]$ with $\deg P = p \ge 1$,  
solving the sign determination problem for
the list $\Der(P')$ (see Definition \ref{def:Thom}) 
means to compute the Thom encodings of the real roots of $P$.
Solving the sign determination problem for
the list $[\Der(P') , Q]$ (the list
$\Der(P')$ extended with the polynomial $Q$) means to additionally compute the sign of $Q$ at each of the real 
roots of $P$, encoded by their Thom encoding. 
\end{remark}

In view of Proposition \ref{prop:enough_TQ} and Remark \ref{cor:thom_enc_sign} we consider the following 
Notation and Definition. 

\begin{notation} \label{notaDPTP0}
Let $\A$ be a commutative ring, $P, Q \in \A[y]$ with $\deg P = p \ge 1$  and
$j \in \N$. We define
$$
\begin{array}{rcl}
 {\rm PDer}_j(P) &=& \Big\{ \displaystyle{\prod_{1\le h \le p-1} (P^{(h)})^{\alpha_h}}  
\ | \ 
\alpha
\in {\{0,1,2\}} ^{\{1,\ldots,p-1\}}, \, \#\{h  \ | \ \alpha_h \ne 0\}\le j \Big\}
\subset \A[y], \\[4mm]
{\rm PDer}_{j}(P;Q) &= &\{AB \ | \ A \in {\rm PDer}_{j}(P), \, 
B \in \{Q, Q^2\} \} \subset \A[y].
\end{array}
$$
\end{notation}

\begin{defn} \label{notaDPTP}
Let $P, Q \in \R[y]$ with $\deg P = p \ge 1$. We define
$$
\begin{array}{rcl}
{\rm thelim}(P)& = &\displaystyle{\bigcup_{A \in 
{\rm PDer}_{{\rm bit}(p)}(P)} } {\rm hmi}(P; A)\subset \R,
\\[4mm]
{\rm thelim}(P;Q)& = & \displaystyle{\bigcup_{A \in {
{\rm PDer}}_{{\rm bit}(p)-1}(P;Q)} } {\rm hmi}(P; A)\subset \R.
\end{array}
$$
\end{defn}

\begin{corollary}\label{thm:prelim_signdet} Let $P, Q \in \R[y]$ with $\deg P = p \ge 1$.
The list of signs of
${\rm thelim}(P)$ and ${\rm thelim}(P; Q)$
determines the Thom encoding of the 
real roots of $P$ and the sign of $Q$ at each of these roots.
\end{corollary}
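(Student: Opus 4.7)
The plan is to assemble the corollary by chaining together Remark \ref{cor:thom_enc_sign}, Proposition \ref{prop:enough_TQ}, and Theorems \ref{hermite} and \ref{bezoutiansignature}. First, by Remark \ref{cor:thom_enc_sign}, computing the Thom encoding of the real roots of $P$ is precisely the sign determination problem for the list $\Der(P')$ at the zeros of $P$, while also determining the sign of $Q$ at these roots corresponds to the sign determination problem for the extended list $[\Der(P'),Q]$. Thus I would reduce the statement to showing that the signs of ${\rm thelim}(P)$ suffice for the first problem and the signs of ${\rm thelim}(P)\cup{\rm thelim}(P;Q)$ for the second.

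Next, I would invoke Proposition \ref{prop:enough_TQ}: the cardinalities $c(\sigma)$ for all sign conditions on the relevant list are determined by the Tarski queries ${\rm TaQu}(A;P)$ as $A$ ranges over products $\prod P_h^{\alpha_h}$ with $\alpha_h\in\{0,1,2\}$ and at most ${\rm bit}(p)$ indices $h$ with $\alpha_h\neq 0$. For the list $\Der(P')=P',\ldots,P^{(p-1)}$, these products are exactly the elements of ${\rm PDer}_{{\rm bit}(p)}(P)$, by Notation \ref{notaDPTP0}. For the list $[\Der(P'),Q]$, each such product either does not involve $Q$, in which case it again lies in ${\rm PDer}_{{\rm bit}(p)}(P)$, or it involves $Q$ raised to the power $1$ or $2$ together with at most ${\rm bit}(p)-1$ factors from the derivatives of $P$; by definition, such a product lies in ${\rm PDer}_{{\rm bit}(p)-1}(P;Q)$. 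Hence the required Tarski queries are indexed by ${\rm PDer}_{{\rm bit}(p)}(P)\cup{\rm PDer}_{{\rm bit}(p)-1}(P;Q)$.

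Finally, I would combine Theorem \ref{hermite} and Theorem \ref{bezoutiansignature}, which give
$$
{\rm TaQu}(A;P)={\rm Si}({\rm Her}(P;A))={\rm PmV}({\rm hmi}(P;A)).
$$
The right-hand side depends only on the signs of the entries of ${\rm hmi}(P;A)$, via Notation \ref{notation:testcoeff}. Therefore the signs of the elements of ${\rm thelim}(P)=\bigcup_{A\in{\rm PDer}_{{\rm bit}(p)}(P)}{\rm hmi}(P;A)$ determine every Tarski query ${\rm TaQu}(A;P)$ with $A\in{\rm PDer}_{{\rm bit}(p)}(P)$, and similarly the signs of the elements of ${\rm thelim}(P;Q)$ determine every Tarski query ${\rm TaQu}(A;P)$ with $A\in{\rm PDer}_{{\rm bit}(p)-1}(P;Q)$. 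Putting these pieces together yields the claim.

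There is no real obstacle here; the argument is a bookkeeping exercise that identifies the set of polynomials appearing as indices in Proposition \ref{prop:enough_TQ} with the explicit families ${\rm PDer}_{{\rm bit}(p)}(P)$ and ${\rm PDer}_{{\rm bit}(p)-1}(P;Q)$, and then uses the fact that Tarski queries can be read off from the signs of the principal minors of Hermite's matrix. The only mildly delicate point is to be careful about how the power $Q^2$ enters: it must be absorbed into the factor $B\in\{Q,Q^2\}$ of the definition of ${\rm PDer}_j(P;Q)$, which is exactly why the index drops from ${\rm bit}(p)$ to ${\rm bit}(p)-1$ in the second family.
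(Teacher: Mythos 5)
Your proof is correct and follows essentially the same route as the paper: it reduces the statement to the sign determination problem for the list $[\Der(P'),Q]$ via Remark \ref{cor:thom_enc_sign}, identifies the required Tarski queries from Proposition \ref{prop:enough_TQ} with the set ${\rm PDer}_{{\rm bit}(p)}(P)\cup{\rm PDer}_{{\rm bit}(p)-1}(P;Q)$, and then reads off each Tarski query from the signs of principal Hermite minors via Theorems \ref{hermite} and \ref{bezoutiansignature}. The paper's own proof simply states the set identity and cites the same three results, so you have supplied the same argument with the bookkeeping made explicit.
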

\begin{proof}{Proof:}
Consider $\cP = P_1, \dots, P_p = [\Der(P'), Q]$;
we have that 
$${\rm PDer}_{{\rm bit}(p)}(P) \ \cup \
{\rm PDer}_{{\rm bit}(p)-1}(P;Q)
=
$$
$$
= \Big\{ \displaystyle{\prod_{1\le h \le p} P_h^{\alpha_h}}  
\ | \ 
\alpha
\in {\{0,1,2\}} ^{\{1,\ldots,s\}}, \, \#\{h  \ | \ \alpha_h \ne 0\}\le {\rm bit}(p) \Big\}.
$$
The result follows then from Theorem \ref{hermite}, Theorem \ref{bezoutiansignature} and 
Proposition \ref{prop:enough_TQ}.
\end{proof}

Note that the results we present here are not optimal in the number of Tarski queries to be considered, but
they are instead well adapted to the parametric case. For a more refined sign determination process see 
\cite[Chapter 10]{BPRbook}.

\section{Eliminating one variable}\label{sec:projecting_one}

In this section, we consider a set of variables $u = (u_1, \dots, u_{\ell})$ which we take as parameters, 
and  a single variable $y$ which we take as the main variable.
In order to study the  elimination of the variable $y$, 
we first review sign determination in a parametric context.

Through this section, 
derivative, degree and leading coefficient are taken with respect to $y$. 
For $P \in \D[u, y]$ we  denote by $\deg P$ and $\deg_u P$ 
its degree with respect to $y$ and to $u$ respectively. For a finite family
$\cF \subset \D[u, y]$, we denote by $\deg \cF$ and $\deg_u \cF$  the maximum
of $\deg P$ and $\deg_u P$ for $P \in \cF$ respectively.

\subsection{Parametric Thom encoding and sign determination}

Given $P, Q \in \D[u, y]$, we want to describe polynomial conditions on the parameters 
fixing the Thom encoding of the real roots of $P$ and the sign of $Q$ at 
each of them. 
The first problem to consider in this parametric context is that some specializations of the
parameters may cause a drop in the degree of $P$, which is particularly important since this degree
fixes the size of the Hermite's matrix of $P$ and $Q$. Note that, on the other hand, 
specializations of the
parameters causing a drop in the degree of $Q$ do not cause any problem. 

\begin{defn}\label{1:def:truncation}
Let $P(u, y)=\sum_{0 \le h \le p}c_h(u)y^h \in \D[u, y]$ with $p \ge 0$ and $c_p(u) \not \equiv 0$. 

For $ -1\le j \le p$,  the 
\emph{truncation of} ${P}$ \emph{at} ${j}$
is
$$
	{\rm Tru}_j(P)=c_j(u)y^j+\ldots+c_0(u) \in \D[u, y].
$$

The \emph{set of truncations
 of} $P$ is
the finite subset of $\D[u, y]$
defined inductively on the degree of $P$ by
${\rm Tru}(0)=\emptyset$ and 
$$
 {\rm Tru}(P)=  
\begin{cases}
    \{P\} & \mbox{if }  {\rm lc}(P) \in \D 
       \\
   \{P\}\cup {\rm Tru}({\rm Tru}_{p-1}(P)) & \mbox{otherwise}.
     \end{cases} 
$$

The \emph{set of relevant coefficients of} $P$ is the finite subset of $\D[u]$ defined 
inductively on the degree of $P$ by 
${\rm RC}(0)=\emptyset$ and 
$$
 {\rm RC}(P)=  
\begin{cases}
    \emptyset & \mbox{if }  {\rm lc}(P) \in \D,\\
   \{ {\rm lc}(P)\}\cup {\rm RC}({\rm Tru}_{p-1}(P)) & \mbox{otherwise}.
     \end{cases} 
$$
\end{defn}

The idea behind Definition \ref{1:def:truncation} is that the degree of $P$ is fixed 
once the sign of the relevant coefficients of $P$ is known. 

Another problem arising in the parametric context is 
that we want to eliminate the variable $y$ keeping conditions on the parameters $u$ 
defined by polynomials rather than rational functions.
Therefore, we consider the following definition.

\begin{notation}
Let $P(u,y) = \sum_{0 \le h \le p}c_h(u)y^h, Q= \sum_{0 \le h \le q}c'_h(u)y^h \in \D[u, y]$ 
with $p \ge 1$ and $c_p(u) \not \equiv 0$. 
As in Definition \ref{def:her_mat}   
we consider the matrix 
${\rm Her}(P;Q) \in \D(u)^{p \times p}$.
Taking into account Remark \ref{rem:degree_her_mat} and following Notation \ref{not:Hermite_minors}, for $0 \le j \le p-1$, we denote by
$$
{\rm HMi}_j(P;Q) = c_p(u)^{(p-j)(q+2p-2)}{\rm hmi}_j(P;Q) \in \D[u].
$$
We denote by   
${{\rm HMi}}(P;Q)$ the list 
$$
[{{\rm HMi}}_{0}(P;Q), \dots, {{\rm HMi}}_{p-1}(P;Q)]
\subset \D[u].
$$ 
\end{notation}

\begin{lemma} \label{lem:complexity_berk}
$$
\deg_u {{\rm HMi}}(P;Q) \le p \Big( (q+2p-2) \deg_u P  +\deg_u Q \Big).
$$
Moreover, given the matrix
$c_{p}(u)^{q + 2p -2}{\rm Her}(P; Q)$, the computation of ${{\rm HMi}}(P;Q)$
can be done  
in
$O(p^4)$
operations in $\D[u]$, each of them  between polynomials of degree bounded by 
$p ( (q+2p-2) \deg_u P  +\deg_u Q )$.
\end{lemma}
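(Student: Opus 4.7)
The plan is to recognize ${\rm HMi}_j(P;Q)$ as the $(p-j)\times(p-j)$ leading principal minor of the given matrix $c_p(u)^{q+2p-2}{\rm Her}(P;Q)$, and then to invoke Berkowitz's Algorithm to extract all of these minors at once.

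First I would establish the degree bound. By Remark \ref{rem:degree_her_mat}, every entry of $c_p(u)^{q+2p-2}{\rm Her}(P;Q)$ is a polynomial in $\D[u]$, of degree at most $q+2p-2$ in the coefficients of $P$ and degree $1$ in the coefficients of $Q$; after substitution these coefficients contribute degrees $\deg_u P$ and $\deg_u Q$ respectively, so each entry has degree in $u$ at most
$$
D := (q+2p-2)\deg_u P + \deg_u Q.
$$
Since ${\rm hmi}_j(P;Q)$ is the $(p-j)\times(p-j)$ leading principal minor of ${\rm Her}(P;Q)$, extracting the factor $c_p(u)^{q+2p-2}$ from each of the $p-j$ rows of the corresponding submatrix of $c_p(u)^{q+2p-2}{\rm Her}(P;Q)$ shows that ${\rm HMi}_j(P;Q)$ coincides with the $(p-j)\times(p-j)$ leading principal minor of that matrix. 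Expanding this determinant yields a polynomial in $\D[u]$ of degree at most $(p-j)D \le pD$, which gives the claimed bound.

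For the complexity, I would apply Berkowitz's Algorithm to the $p\times p$ matrix $c_p(u)^{q+2p-2}{\rm Her}(P;Q)$ regarded as a matrix over the commutative ring $\D[u]$. The algorithm is recursive: for $k=1,\ldots,p$ it computes the characteristic polynomial of the $k\times k$ leading principal submatrix from the previous one in $O(k^3)$ ring operations, for a total of $O(p^4)$ operations in $\D[u]$. Reading off the constant terms (up to sign) of the resulting $p$ characteristic polynomials simultaneously recovers all leading principal minors of sizes $1,\ldots,p$, hence all ${\rm HMi}_j(P;Q)$ for $0\le j\le p-1$.

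The only remaining point is to verify that no intermediate polynomial in the recursion overflows the target degree bound. This is not a serious obstacle: every intermediate quantity produced by Berkowitz is either an entry of the input matrix, a coefficient of a characteristic polynomial of some leading principal submatrix (itself a signed sum of principal minors of that submatrix), or a sum of products of such quantities formed inside Samuelson's identity, so each has degree in $u$ bounded by $pD$. Thus each of the $O(p^4)$ ring operations is carried out between polynomials of degree at most $pD = p\bigl((q+2p-2)\deg_u P + \deg_u Q\bigr)$, as required.
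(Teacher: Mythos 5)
Your proof takes essentially the same route as the paper's: identify ${\rm HMi}_j(P;Q)$ as the $(p-j)$-th leading principal minor of $c_p(u)^{q+2p-2}{\rm Her}(P;Q)$, bound the entry degrees via Remark \ref{rem:degree_her_mat}, and invoke Berkowitz's division-free algorithm, which computes all leading principal minors recursively within $O(p^4)$ operations in $\D[u]$. The paper's own proof is a two-sentence version of exactly this, so your additional care about the degrees of intermediate quantities in the Samuelson recursion is a correct elaboration rather than a departure.
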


\begin{proof}{Proof:}
The degree bound for ${{\rm HMi}}(P;Q)$ follows from the fact that 
${{\rm HMi}}(P;Q)$ is the list of principal minors of the matrix
$c_{p}(u)^{q + 2p -2}{\rm Her}(P; Q) \in \D[u]^{p \times p}$ and the 
degree bound from Remark
\ref{rem:degree_her_mat}.

For the bound on the number of operations in $\D[u]$
and the degree bound in intermediate computations, we simply use 
Berkowitz Algorithm (see \cite{Berk}), taking into account that along the execution of this division free algorithm 
for the computation of the determinant of a given matrix,
all its principal minors are recursively computed. 
\end{proof}

Now we consider the following definitions.

\begin{defn} \label{not:DPTP_parametric}
Let $P, Q \in \D[u, y]$ with $\deg P = p$. If $p \ge 1$, following Notation \ref{notaDPTP0}, we define
$$
\begin{array}{rcl}
{{\rm ThElim}}(P)& = &\displaystyle{\bigcup_{A \in 
{\rm PDer}_{{\rm bit}(p)}(P)} } {{\rm HMi}}(P; A)\subset \D[u],
\\[4mm]
{{\rm ThElim}}(P;Q)& = & \displaystyle{\bigcup_{A \in {
{\rm PDer}}_{{\rm bit}(p)-1}(P;Q)} } {{\rm HMi}}(P; A)\subset \D[u].
\end{array}
$$
If $p = 0$ (i.e., $P \in \D[u]$), we define  
${{\rm ThElim}}(P)$ and ${{\rm ThElim}}(P; Q)$ as the empty lists.

Finally, we define
$$
\begin{array}{rcl}
{\rm Elim}(P;Q)& = & {\rm RC}(P) \cup \displaystyle{ \bigcup_{T \in {\rm Tru}(P)}  \Big(    {{\rm ThElim}}(T) 
\cup  {{\rm ThElim}}(T;Q)} \Big).
\end{array}
$$
\end{defn}

We can prove now the following result. 

\begin{proposition} \label{thm:fixing_thom_and_sign}
Let $P, Q \in \D[u, y]$ with $P \not \equiv 0$. For every $\upsilon \in \R^{\ell}$, 
the 
realizable
sign condition 
on the family
$$ 
{\rm Elim}(P;Q)
\subset \D[u]
$$
satisfied by $\upsilon$
determines 
the fact that $P(\upsilon, y) \equiv 0$ or 
$P(\upsilon, y) \not\equiv 0$, and,
if  $P(\upsilon, y) \not\equiv 0$,
it also determines
the 
Thom encoding of the 
real roots of $P(\upsilon, y)$ and the sign of $Q(\upsilon, y)$ at each of these roots.
\end{proposition}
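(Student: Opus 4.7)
The plan is to decompose the information carried by the sign condition on ${\rm Elim}(P;Q)$ at $\upsilon$ into two layers matching the decomposition
$${\rm Elim}(P;Q) = {\rm RC}(P) \cup \bigcup_{T \in {\rm Tru}(P)} \bigl({\rm ThElim}(T) \cup {\rm ThElim}(T;Q)\bigr).$$
The first layer (the signs of ${\rm RC}(P)$ at $\upsilon$) is used to pin down the effective degree of $P$ at $\upsilon$ and to single out the correct truncation $T^\star \in {\rm Tru}(P)$ with $T^\star(\upsilon,y) = P(\upsilon,y)$ in $\R[y]$. The second layer (the signs of the associated Hermite minors) then reduces the problem, via specialization, to the univariate setting already handled by Corollary \ref{thm:prelim_signdet}.

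For the first layer, enumerate ${\rm Tru}(P) = \{P_0 = P, P_1, \dots, P_r\}$ where each $P_{i+1}$ is obtained from $P_i$ by deleting its leading term in $y$. By Definition \ref{1:def:truncation}, ${\rm RC}(P)$ consists precisely of those ${\rm lc}(P_i) \in \D[u] \setminus \D$, while a possibly remaining ${\rm lc}(P_r) \in \D$ is nonzero and known. The signs of ${\rm RC}(P)$ at $\upsilon$ therefore determine whether there exists a smallest index $i^\star$ with ${\rm lc}(P_{i^\star})(\upsilon) \neq 0$: if no such index exists, then $P(\upsilon,y) \equiv 0$; otherwise $T^\star := P_{i^\star}$ satisfies $T^\star(\upsilon,y) = P(\upsilon,y)$ in $\R[y]$, with $\deg_y P(\upsilon,y) = \deg T^\star$, and the sign of ${\rm lc}(T^\star)(\upsilon) \in \R \setminus \{0\}$ is known. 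If $\deg T^\star = 0$, then $P(\upsilon,y)$ is a nonzero constant and the conclusion is immediate.

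Assume now $\deg T^\star = p' \geq 1$. By the definition of ${\rm HMi}_j$, for every $A \in {\rm PDer}_{{\rm bit}(p')}(T^\star) \cup {\rm PDer}_{{\rm bit}(p')-1}(T^\star;Q)$ and every $0 \leq j \leq p'-1$, we have in $\D(u)$
$${\rm HMi}_j(T^\star;A) = {\rm lc}(T^\star)^{(p'-j)(\deg A + 2p' - 2)} \cdot {\rm hmi}_j(T^\star;A).$$
Since ${\rm lc}(T^\star)(\upsilon) \neq 0$, specialization at $\upsilon$ is legitimate and shows that the sign of ${\rm HMi}_j(T^\star;A)(\upsilon)$ differs from the sign of ${\rm hmi}_j(T^\star(\upsilon,y);A(\upsilon,y))$ only by the sign of the factor ${\rm lc}(T^\star)(\upsilon)^{(p'-j)(\deg A + 2p' - 2)}$, which is fully determined by the already-known sign of ${\rm lc}(T^\star)(\upsilon)$ and the parity of the explicit exponent. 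Consequently the signs of ${\rm ThElim}(T^\star) \cup {\rm ThElim}(T^\star;Q)$ at $\upsilon$, together with the first layer, determine the signs of every element of ${\rm thelim}(T^\star(\upsilon,y)) \cup {\rm thelim}(T^\star(\upsilon,y);Q(\upsilon,y))$; Corollary \ref{thm:prelim_signdet} applied to the pair $T^\star(\upsilon,y) = P(\upsilon,y)$ and $Q(\upsilon,y)$ then yields the Thom encoding of the real roots of $P(\upsilon,y)$ and the sign of $Q(\upsilon,y)$ at each of them.

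The main obstacle is purely bookkeeping: for each element of the relevant ${\rm ThElim}$ lists, one must keep track of which $A$ and which index $j$ produced it in order to read off the correct parity of the exponent $(p'-j)(\deg A + 2p' - 2)$, and thereby the sign flip relating ${\rm HMi}_j$ and ${\rm hmi}_j$ at $\upsilon$. The main conceptual point is that the ${\rm RC}(P)$ block of ${\rm Elim}(P;Q)$ stabilizes the effective degree of $P$ at $\upsilon$, after which the problem reduces by a single specialization step to the univariate statement of Corollary \ref{thm:prelim_signdet}.
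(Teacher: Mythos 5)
Your proposal is correct and follows essentially the same two-layer argument as the paper: first use the signs of ${\rm RC}(P)$ at $\upsilon$ to detect whether $P(\upsilon,y)\equiv 0$ and otherwise to identify the relevant truncation $T^\star={\rm Tru}_{p'}(P)$ with known leading-coefficient sign, then use the identity ${\rm HMi}_j(T^\star;A)=c_{p'}(u)^{(p'-j)(\deg A+2p'-2)}{\rm hmi}_j(T^\star;A)$ to recover the signs of the specialized lists ${\rm thelim}(P(\upsilon,y))$ and ${\rm thelim}(P(\upsilon,y);Q(\upsilon,y))$ and conclude via Corollary \ref{thm:prelim_signdet}. No gaps; the exponent-parity bookkeeping you flag is exactly the point the paper relies on as well.
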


\begin{proof}{Proof:} 
Let $p = \deg P$. It is clear that 
the fact that $P(\upsilon, y) \equiv 0$ or $P(\upsilon, y) \not\equiv 0$
is determined by the sign condition on 
${\rm RC}(P)$ satisfied by $\upsilon$. From now we suppose that 
$P(\upsilon, y) \not \equiv 0$. Again, it is also clear that 
the degree of $P(\upsilon, y) \le p$
is also determined by the sign condition on 
${\rm RC}(P)$ satisfied by $\upsilon$; we call $p'$ this degree. 
If $p' = 0$ then $P(\upsilon, y)$ has no real root; so from now we suppose $p' \ge 1$ 
and we only keep the 
information given by the sign
condition satisfied by $\upsilon$ on
$$
{{\rm ThElim}}(T) 
\cup {{\rm ThElim}}(T;Q)
$$
for 
$$
T = {\rm Tru}_{p'}(P)
\in 
{\rm Tru}(P).
$$

Now, for $0 \le j \le p'-1$ and  $A \in {\rm PDer}_{{\rm bit}(p')}(T)$ or 
$A \in {\rm PDer}_{{\rm bit}(p')-1}(T;Q)$
we have that 
$$
{{\rm HMi}}_j(T; A)
= 
c_{p'}(u)^{(p'-j)(q+2p'-2)}{\rm hmi}_j(T;A).
$$
It is the case that either $c_{p'}(u) \in \D$ or 
$ c_{p'}(u)\in {\rm RC}(P)$, but in any situation
the sign of $c_{p'}(\upsilon)$ is known, and then the sign of every element in 
${\rm hmi}_j(T(\upsilon, y);A(\upsilon, y))$ is also known.  

Finally, by Corollary \ref{thm:prelim_signdet}
this is enough to determine 
the Thom encoding of the 
real roots of $T(\upsilon, y)$ and the sign of $Q(\upsilon, y)$ at each of these roots.
\end{proof}

\begin{remark}\label{rem:prop_based_on_alg}
The proof of correctness of Proposition \ref{thm:fixing_thom_and_sign} is based on the determination of Thom 
encoding of real roots and the sign of another polynomial at these roots; 
thus, this proof is entirely based on algebra. For instance, there is no need of 
sample points meeting every connected component of the realization of sign conditions. 
\end{remark}

\begin{remark}\label{rem:degree_and_number_thelim} 
Let $P, Q \in \D[u, y]$ with $\deg P = p \ge 1$ and $\deg Q = q$. 
Following Notation \ref{notaDPTP0},  there are
$$
\sum_{0 \le h \le j} {{p-1}\choose{h}}2^h \le 2p^{j}
$$
elements in ${\rm PDer}_j(P)$.
Therefore, there are 
at most $2p^{{\rm bit}(p) + 1}$ elements in 
${{\rm ThElim}}(P)$ and by Lemma \ref{lem:complexity_berk}
their degree in $u = (u_1, \dots, u_\ell)$ are bounded by 
$$
p\Big(
(2(p-1){\rm bit}(p) + 2p - 2)\deg_u P + 2{\rm bit}(p)\deg_u P
\Big) \le 2p^2( {\rm bit}(p)+1)\deg_u P.
$$
Similarly, there are at most
$4p^{{\rm bit}(p)}$  elements in ${{\rm ThElim}}(P;Q)$ and 
their degree in $u = (u_1, \dots, u_\ell)$ are bounded by 
\begin{eqnarray*}
&&p\Big(
(2(p-1)({\rm bit}(p)-1) + 2q + 2p - 2)\deg_u P
+
2({\rm bit}(p)- 1)\deg_u P + 2 \deg_u Q 
\Big) =
\\
&=&
p\Big(
( 2p{\rm bit}(p) + 2q - 2)  \deg_u P + 2\deg_u Q
\Big).
\end{eqnarray*}
\end{remark}

\subsection{Fixing the realizable sign conditions on a family}

In order to fix the realizable sign conditions on a parametric family of univariate 
polynomials, we consider the following 
definition.

\begin{defn}\label{def:sign_det_sect_three_first} 
Let $\cF$ be
a finite family of polynomials in $\D[u,y]$. 
We denote by 
$$
 \Der({\cal F}) = \bigcup_{P \in {\cal F} \setminus \D[u]}\Der(P) \subset \D[u,y].
$$
We define 
$$ 
{\rm Elim}({\mathcal F}) =  
\bigcup_{ P\in {\cal F} \setminus \{0\}} 
\left(
{\rm RC}(P) \cup \bigcup_{T \in {\rm Tru}(P)}  \displaystyle{\Big(    {{\rm ThElim}}(T) 
\cup  \bigcup_{Q \in  \Der(\cF  \setminus \{P\})} {{\rm ThElim}}(T;Q)} \Big) \right)
\subset \D[u].
$$
\end{defn}

We prove now the following result.

\begin{proposition}\label{thm:elimination_one_variable}
Let $\cF$ be
a finite family of polynomials in $\D[u,y]$. 
For every $\upsilon \in \R^{\ell}$, 
the 
realizable
sign condition 
on the family 
$$ 
{\rm Elim}({\mathcal F}) \subset \D[u]$$
satisfied by $\upsilon$
determines the list 
${\rm SIGN}(\cF(\upsilon, y))$.
\end{proposition}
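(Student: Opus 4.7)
My plan is to reconstruct the set $\SIGN(\cF(\upsilon,y))$ from the realizable sign condition on ${\rm Elim}(\cF)$ at $\upsilon$ in three stages: (a) for each $P\in\cF$, extract the Thom encodings of its real roots together with the signs of all derivatives of every other polynomial at those roots; (b) use this cross-information to totally order the union of all real roots of polynomials in $\cF$; (c) determine the sign of each polynomial at each root and on each open interval of the induced partition of $\R$. Since every realizable sign condition on $\cF(\upsilon,y)$ is attained either at a root of some polynomial in $\cF$ or on an open interval on which every polynomial has constant sign, this will give $\SIGN(\cF(\upsilon,y))$.

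For stage (a), repeating the argument of Proposition~\ref{thm:fixing_thom_and_sign} for each $P\in\cF\setminus\{0\}$ shows that the sign condition on ${\rm RC}(P)\cup\bigcup_{T\in{\rm Tru}(P)}\bigl({\rm ThElim}(T)\cup\bigcup_{Q\in\Der(\cF\setminus\{P\})}{\rm ThElim}(T;Q)\bigr)\subset {\rm Elim}(\cF)$ at $\upsilon$ determines whether $P(\upsilon,y)\equiv 0$ and, if not, the truncation $T_P\in{\rm Tru}(P)$ with $T_P(\upsilon,y)=P(\upsilon,y)$, the Thom encodings (with respect to $T_P$) of the real roots of $P(\upsilon,y)$, and the sign of $Q(\upsilon,\theta)$ for every $Q\in\Der(\cF\setminus\{P\})$ and every real root $\theta$ of $P(\upsilon,y)$. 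Ranging $Q$ over $\Der(\cF\setminus\{P\})$ yields the signs of $R(\upsilon,\theta),\partial_y R(\upsilon,\theta),\ldots,\partial_y^{\deg R-1}R(\upsilon,\theta)$ for every $R\in(\cF\setminus\{P\})\setminus \D[u]$. For $R\in\cF\cap\D[u]$ the sign of $R(\upsilon)$ is directly determined by the sign condition on ${\rm RC}(R)\subset {\rm Elim}(\cF)$. Combined with the known sign of the leading coefficient of $R$, this determines the multiplicity of $\theta$ as a root of $R(\upsilon,y)$ as well.

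Stage (b) is the decisive step and the main obstacle. Two roots of the same polynomial $P$ are compared by Proposition~\ref{lemma_order_thom_encoding} from their Thom encodings with respect to $P$. To compare $\theta_1$, a root of $P$, with $\theta_2$, a root of $Q$, with $P\neq Q$, I observe that the sign of $Q(\upsilon,y)$ on each interval of the partition of $\R$ by $Q$'s own real roots is determined by the sign of the leading coefficient of $Q(\upsilon,y)$ together with the multiplicities of those roots (all known from stage (a)). Matching this reconstructed global sign pattern of $Q$ to the known value $\sign Q(\upsilon,\theta_1)$ places $\theta_1$ either inside exactly one such interval or at one of $Q$'s roots; in the latter case, the Thom encoding of $\theta_1$ with respect to $Q$ — read off from the signs of $\partial_y^j Q(\upsilon,\theta_1)$ for $1\le j\le\deg Q-1$ produced in stage (a) — identifies which root of $Q$ it is, and Proposition~\ref{lemma_order_thom_encoding} concludes. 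Iterating over all pairs positions every real root of every $P\in\cF$ unambiguously relative to the real roots of every other polynomial, giving the desired total order.

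For stage (c), the signs of $\cF(\upsilon,\theta)$ at each overall root $\theta$ are already available from stage (a). On an open interval $(\alpha,\beta)$ of the resulting partition with $\alpha$ a root of $\cF$, each $R\in\cF$ has constant sign on $(\alpha,\beta)$, equal to $\sign R(\upsilon,\alpha)$ if $R(\upsilon,\alpha)\neq 0$ and to $\sign\partial_y^{\mu}R(\upsilon,\alpha)$ if $\alpha$ is a root of $R(\upsilon,y)$ of multiplicity $\mu$; on the unbounded intervals the signs are fixed by the signs and parities of degrees of the leading coefficients. All required data is produced by stages (a) and (b), so the list of sign conditions on $\cF(\upsilon,y)$ attained across all pieces of the partition — which is precisely $\SIGN(\cF(\upsilon,y))$ — is determined.
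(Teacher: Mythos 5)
Your stages (a) and (c) are sound and essentially identical to the paper's argument. The genuine gap is in stage (b), in the case where $\theta_1$ (a root of $P(\upsilon,y)$) is \emph{not} a root of $Q(\upsilon,y)$. There you claim that matching the single value $\sign Q(\upsilon,\theta_1)$ against the reconstructed global sign pattern of $Q(\upsilon,y)$ places $\theta_1$ ``inside exactly one interval'' of the partition of $\R$ by the roots of $Q(\upsilon,y)$. This fails whenever $Q(\upsilon,y)$ takes that same sign on more than one of those intervals. For instance, with $P(\upsilon,y)=(y-3)^2$ and $Q(\upsilon,y)=(y-1)^2(y-5)^2$, the datum $\sign Q(\upsilon,3)=1$ is compatible with $3$ lying in any of $(-\infty,1)$, $(1,5)$, $(5,+\infty)$, and the signs of $P$ at the roots $1,5$ of $Q$ (both positive) do not break the tie either; so the information your stage (b) actually invokes does not determine the order of $3$ relative to $1$ and $5$.

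The repair is immediate and is exactly what the paper does: your stage (a) already provides the signs of \emph{all} of $\Der(Q)$ at $\theta_1$ (since $\Der(Q)\subset\Der(\cF\setminus\{P\})$), and the sign of the relevant leading coefficient of $Q(\upsilon,y)$ is known from ${\rm RC}(Q)$ (or is a constant of $\D$). This gives the extended sign conditions on $\Der(Q)$ realized at $\theta_1$ and at each root $\theta_2$ of $Q(\upsilon,y)$; these are distinct (they already differ at $Q$ itself), the order $\preceq_Q$ is total on realizable sign conditions on $\Der(Q)$, and Proposition~\ref{lemma_order_thom_encoding} then orders $\theta_1$ and $\theta_2$ directly, with no need to first localize $\theta_1$ in the interval decomposition induced by $Q$. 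In other words: use the full vector of derivative signs in \emph{all} cases, not only when $\theta_1$ happens to be a root of $Q$. With stage (b) so repaired, your proof coincides with the paper's.
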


\begin{proof}{Proof:} 
By Proposition \ref{thm:fixing_thom_and_sign}, the sign condition on ${\rm Elim}({\mathcal F})$
satisfied by $\upsilon$ determines for every $P \in 
\cF \setminus \{0\}$ the fact that $P(\upsilon, y) \equiv 0$ or $P(\upsilon, y) \not\equiv 0$, and,
if  $P(\upsilon, y) \not\equiv 0$, 
it also determines the Thom encoding of the real roots of $P(\upsilon, y)$ 
and, for every $Q \in \Der(\cF  \setminus \{P\})$, the sign of $Q(\upsilon, y)$ at each of these
real roots. 

Now, for each $P \in \cF$ with $P(\upsilon, y) \not \equiv 0$, since the Thom encoding of 
the real roots of $P(\upsilon, y)$ is known and 
the sign of the leading coefficient of $P(\upsilon, y)$ 
is also known, 
we can deduce the multiplicity of each real root
and, 
by Proposition \ref{lemma_order_thom_encoding}, also the order between them.
All this information is enough to 
determine the sign of $P(\upsilon, y)$ on every (bounded or unbounded) interval of the real 
line defined by the 
real roots of $P(\upsilon, y)$. 

Finally, in order to determine the list ${\rm SIGN}(\cF(\upsilon, y))$ we only need to know how to 
order
the real roots coming from different polynomials
$P_1(\upsilon, y)$ and $P_2(\upsilon, y)$  in $\cF(\upsilon, y)$. 
Once again by Proposition \ref{thm:fixing_thom_and_sign}, the signs of $\Der(P_1(\upsilon, y))$ 
at all the real roots of 
$P_1(\upsilon, y)$ and $P_2(\upsilon, y)$ is known. 
The only detail to take into account is that if it happens that 
$\deg P_1(u, y) = \deg P_1(\upsilon, y) = p_1$ 
we also need to know the sign of $P_1(\upsilon, y)^{(p_1)}$ 
at all the real roots of 
$P_1(\upsilon, y)$ and $P_2(\upsilon, y)$ to be able to order them
(and by definition, $P_1(u, y)^{(p_1)} \not \in \Der(P_1(u, y))$). 
Nevertheless, this 
is indeed the case since the leading coefficient $c_{p_1}(u)$ of $P_1(u, y)$
is either in $\D$ or in ${\rm RC}(P_1)$ and in any situation the sign 
of $c_{p_1}(\upsilon)$ is known. 
Finally we can order the real roots of
$P_1(\upsilon, y)$ and $P_2(\upsilon, y)$ using once again Proposition \ref{lemma_order_thom_encoding}.
\end{proof}

\begin{remark}\label{elimalgebraic}
As in Proposition \ref{thm:fixing_thom_and_sign} (see Remark \ref{rem:prop_based_on_alg}), 
the proof of correctness of Proposition  \ref{thm:elimination_one_variable} is entirely based on algebra. 
No geometric concept is needed. 
\end{remark}

\begin{lemma}\label{lem:degree_and_number_elim} Let $\cF$ be
a family of $s$ polynomials in $\D[u,y]$ with $\deg \cF = p$. 
If $p = 0$ (i.e., $\cF \subset \D[u]$) then ${\rm Elim}(\cF) = \cF \setminus \D.$ If $p \ge 1$, 
there are at most 
$$
4s^2p^{{\rm bit}(p) + 2}
$$
elements in ${\rm Elim}({\mathcal F})$, their degree in $u = (u_1, \dots, u_\ell)$ are bounded by 
$$
4p^3\deg_u \cF 
$$
and
the complexity of 
computing ${\rm Elim}(\cF)$ is
$$O(s^2p^{{\rm bit}(p)+5})$$
operations in $\D[u]$, each of them between polynomials of degree at most $4p^3\deg_u \cF$.
\end{lemma}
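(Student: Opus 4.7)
The plan is to read off each bound directly from the recursive definition of ${\rm Elim}(\cF)$ in Definition \ref{def:sign_det_sect_three_first}, splitting the contribution of each polynomial $P \in \cF \setminus \{0\}$ into three pieces: the relevant coefficients ${\rm RC}(P)$, the sets ${{\rm ThElim}}(T)$ for $T \in {\rm Tru}(P)$, and the sets ${{\rm ThElim}}(T;Q)$ for $T \in {\rm Tru}(P)$ and $Q \in \Der(\cF \setminus \{P\})$.

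The case $p = 0$ is essentially immediate from the definitions: if $P \in \D \setminus \{0\}$ then ${\rm lc}(P) \in \D$ forces ${\rm RC}(P) = \emptyset$ and ${\rm Tru}(P) = \{P\}$ with ${{\rm ThElim}}(P) = {{\rm ThElim}}(P;Q) = \emptyset$, while if $P \in \D[u] \setminus \D$ then ${\rm RC}(P) = \{P\}$ and the same ${{\rm ThElim}}$-terms still vanish, which together yield ${\rm Elim}(\cF) = \cF \setminus \D$.

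For $p \ge 1$, I would bound the cardinality piece by piece using $|{\rm RC}(P)| \le p$, $|{\rm Tru}(P)| \le p+1$, $|\Der(\cF \setminus \{P\})| \le s p$, together with $|{{\rm ThElim}}(T)| \le 2p^{{\rm bit}(p)+1}$ and $|{{\rm ThElim}}(T;Q)| \le 4 p^{{\rm bit}(p)}$ from Remark \ref{rem:degree_and_number_thelim}. Summing the three pieces and then over the at most $s$ nonzero $P \in \cF$ gives the dominant contribution $s \cdot (p+1) \cdot sp \cdot 4 p^{{\rm bit}(p)}$, which after absorbing the smaller terms is $\le 4 s^2 p^{{\rm bit}(p)+2}$. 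For the degree bound, the relevant coefficients have degree at most $\deg_u \cF$, while Remark \ref{rem:degree_and_number_thelim} gives $\deg_u$ of each element of ${{\rm ThElim}}(T)$ or ${{\rm ThElim}}(T;Q)$ bounded by $2 p^2({\rm bit}(p)+1)\deg_u \cF$; since ${\rm bit}(p) \le p$ and $2(p+1) \le 4p$ for $p \ge 1$, this is at most $4 p^3 \deg_u \cF$.

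For the complexity count, the work reduces to, for each pair $(P,T)$ with $P \in \cF$ and $T \in {\rm Tru}(P)$, building the Hermite matrices $c_{p}(u)^{q+2p-2}{\rm Her}(T;A)$ for $A$ running over ${\rm PDer}_{{\rm bit}(p)}(T)$ and over ${\rm PDer}_{{\rm bit}(p)-1}(T;Q)$ for each $Q \in \Der(\cF\setminus\{P\})$, and computing the list ${{\rm HMi}}(T;A)$ of their principal minors. Using the cardinality bounds $|{\rm PDer}_{{\rm bit}(p)}(T)| \le 2p^{{\rm bit}(p)}$ and $|{\rm PDer}_{{\rm bit}(p)-1}(T;Q)| \le 4 p^{{\rm bit}(p)-1}$ together with $|{\rm Tru}(P)| \le p+1$, $|\Der(\cF\setminus\{P\})| \le sp$, and $|\cF \setminus \{0\}| \le s$, the total number of matrices processed is $O(s^2 p^{{\rm bit}(p)+1})$. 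Invoking Lemma \ref{lem:complexity_berk}, each such matrix costs $O(p^4)$ operations in $\D[u]$ between polynomials whose degree is exactly the bound $4p^3 \deg_u \cF$ established above, yielding the claimed $O(s^2 p^{{\rm bit}(p)+5})$ operations. The main bookkeeping care-point will be confirming that the preliminary construction of the Hermite entries via the formula in Remark \ref{rem:degree_her_mat} — which reduces to a single precomputation of the universal polynomials $A_{p,j}$ together with substitutions — is absorbed into the Berkowitz cost, and that no intermediate polynomial escapes the $4p^3 \deg_u \cF$ degree bound.
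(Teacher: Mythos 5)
Your handling of the case $p=0$ and of the cardinality and degree bounds follows the paper's route and is essentially correct. One cosmetic point: with the looser counts $\#{\rm Tru}(P)\le p+1$ and $\#\Der(\cF\setminus\{P\})\le sp$, the sum comes out to roughly $4s^2p^{{\rm bit}(p)+2}(1+1/p)$, so to land on the stated constant you need the sharper observations that the degree-$0$ truncation contributes only empty ${\rm ThElim}$-lists (leaving at most $p$ relevant truncations per $P$) and that $\#\Der(\cF\setminus\{P\})\le (s-1)p$.

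The complexity part, however, has a genuine gap precisely at the point you defer as a ``bookkeeping care-point.'' Your count (number of matrices)\,$\times$\,(Berkowitz cost) only covers the extraction of the principal minors; it presupposes that the scaled Hermite matrices $c_{p'}(u)^{a+2p'-2}{\rm Her}(T;A)$ are already written down, and constructing them is where the real work lies. The route you sketch --- evaluating the universal polynomials ${\rm A}_{p,j}$ of Remark \ref{rem:degree_her_mat} at the coefficients of $T$ --- does not stay within the budget: one needs ${\rm A}_{p,j}$ for $j$ up to $2p\,{\rm bit}(p)+2p-2$, and the number of monomials of ${\rm A}_{p,j}$ (partitions of $j$ into parts of size at most $p$) grows superpolynomially in $p$, so this ``substitution'' is not an $O(p^4)$ step absorbed into Berkowitz. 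In the paper, Remark \ref{rem:degree_her_mat} serves only to establish the degree bound; the entries are actually computed by precomputing, for each truncation $T$, the traces ${\rm Tra}(M_h)$ of powers of the scaled companion matrix within $O(p^3)$ operations, using $M_{h+1}=M_1\cdot M_h$ for $h<p'-1$ and the Newton-type recursion ${\rm Tra}(M_h)=-\sum_{1\le i\le p'}c_{p'-i}(u)c_{p'}(u)^{i-1}{\rm Tra}(M_{h-i})$ for $h\ge p'$; each Hermite entry is then a linear combination of $O(p)$ precomputed traces via equation (\ref{eq:rec_formula}), and only $2p'-1$ entries are needed since the matrix is Hankel. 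You also leave unaccounted the cost of forming the products $A\in{\rm PDer}_{{\rm bit}(p')}(T)$ and $A\in{\rm PDer}_{{\rm bit}(p')-1}(T;Q)$ themselves, which the paper bounds by $O(sp^{{\rm bit}(p)+3})$ operations per $T$. These missing steps are exactly what justify both the stated operation count and the claim that no intermediate polynomial exceeds degree $4p^3\deg_u\cF$.
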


\begin{proof}{Proof:} If $p = 0$ there is nothing to prove, so we suppose $p \ge 1$. 
By Remark \ref{rem:degree_and_number_thelim}, 
there are at most 
$$
s\left(
p+1 + p \left(
2p^{{\rm bit}(p) + 1} + (s-1)4p^{{\rm bit}(p)+1}
\right)
\right) 
\le
4s^2p^{{\rm bit}(p) + 2}
$$
elements in ${\rm Elim}({\mathcal F})$ and their degree in $u = (u_1, \dots, u_\ell)$ are bounded by 
$$
2p^2( {\rm bit}(p)+1)\deg_u \cF \le 
4p^3\deg_u \cF.
$$

The computation of ${\rm RC}(P)$ for every $P \in \cF \setminus\{0\}$ is cost free. 
There are at most $sp$ polynomials $T \in {\rm Tru}(P)$ for some 
$P\in \cF \setminus\{0\}$ to consider, and for each of these polynomials $T$ we have to compute
$$
{{\rm ThElim}}(T) 
\cup  \bigcup_{Q \in  \Der(\cF  \setminus \{P\})} {{\rm ThElim}}(T;Q)
$$
(this is so since if $\deg T = 0$ then ${{\rm ThElim}}(T)$
and ${{\rm ThElim}}(T;Q)$ are the empty lists). 

From now, we consider a fixed $T(u,y) = \sum_{0 \le h \le p'} c_h(u)y^h \in \D[u, y]$, 
with $1 \le p' \le p$ and $c_{p'}(u) \not \equiv 0$.  
We consider also the basis ${\cal B} = \{1, y, \dots, y^{p'-1}\}$ of the $\R(u)$-vector space
$V = \R(u)[y]/T(u,y)$. For $h \in \N$, we 
define $M_h \in \D[u]^{p' \times p'}$ as the matrix in basis ${\cal B}$ of the linear mapping
of multiplication by $(c_{p'}(u)y)^h$ in $V$. 
It is clear that
$$
M_1 = \left(\begin{array}{ccccc}
       0 & \cdots & \cdots & 0 & -c_0(u) \cr
       c_{p'}(u) & \ddots &  & \vdots & -c_1(u) \cr
      0 & c_{p'}(u) & \ddots & \vdots & \vdots \cr
\vdots & \ddots & \ddots & 0 & \vdots \cr
0 &\cdots & 0  & c_{p'}(u) & -c_{{p'}-1}(u)
      \end{array}
 \right)
$$
and also that $\deg_u M_h \le h \deg_u P$.

We need to compute ${{\rm HMi}}(T; A)$ for a number of polynomials $A \in \D[u, y]$
with $\deg A \le 2p{\rm bit}(p)$.  
If $A = \sum_{0 \le h \le a} c'_h(u)y^h$, 
then for $0 \le j \le p'-1$, 
${{\rm HMi}}_j(T; A)$ is the $(p'-j)$-th principal minor of the matrix
$$c_{p'}(u)^{a + 2p' -2}{\rm Her}(T; A),$$
and for $1 \le j_1, j_2 \le p'$ we have that 
\begin{equation}\label{eq:rec_formula}
c_{p'}(u)^{a + 2p' -2}{\rm Her}(T; A)_{j_1, j_2} = \sum_{0 \le h \le a}
c'_h(u) c_{p'}(u)^{a + 2p' - h - j_1 - j_2 }{\rm Tra}(M_{h+j_1+j_2 - 2}).
\end{equation}

We describe first the part of the computation which depends on $T$ but not on $A$,
then we describe the computation of all the polynomials $A$ we need to consider, and finally we describe the part 
of the computation which depends on both $T$ and $A$. 

{\it First Step:} 
Our aim is to compute ${\rm Tra}(M_h)$ for $0 \le h \le 2p{\rm bit}(p) + 2p - 2$.

The matrices $M_0$ and $M_1$ are already known and computing their traces
is 
cost free. We compute then
${\rm Tra}(M_h)$ for $2 \le h \le p'-1$ 
and then 
we proceed using a recursive formula to compute all the remaining required traces.

Successively computing
$$M_2=M_1\cdot M_1, \ \ldots \ , M_{h+1}=M_1 \cdot M_h, \ \ldots \, M_{p'-1}=M_1\cdot M_{p'-2},$$
and 
taking into account that $M_1$ has at most $2$ non-zero entries per row, we can compute $M_h$ 
for $2 \le h \le p'-1$
within $O(p^3)$
operations in $\D[u]$, and the traces of all these matrices within 
$ O(p^2) $
operations in $\D[u]$.

For $h \ge p'$ we have that
$$
{\rm Tra}(M_h) = - \sum_{1 \le i \le p'}c_{p'-i}(u)c_{p'}(u)^{i-1}{\rm Tra}(M_{h - i}).
$$
We compute $c_{p'}(u)^{i-1}$ for $1 \le i \le p'$ within $O(p)$ operations in $\D[u]$, then we successively compute 
${\rm Tra}(M_h)$ for $p' \le h \le 2p{\rm bit}(p) + 2p - 2$
within $O(p^2{\rm bit}(p))$ operations in $\D[u]$. 

Finally, the whole step can be done within $O(p^3)$
operations in $\D[u]$. Since for $h\in \N$ we have that $\deg_u M_h  \le h \deg_u P$, these operations
are between polynomials of degree at most $(2p{\rm bit}(p) + 2p - 2) \deg_u \cF \le 4p^3\deg_u \cF$. 

{\it Second Step:} 
Now we proceed to the computation of all the polynomials $A$. 

Following Remark \ref{rem:degree_and_number_thelim} there are at most 
$2p^{{\rm bit}(p)}$ polynomials $A \in {\rm PDer}_{{\rm bit}(p')}(T)$. We  compute
all of them starting from the constant polynomial $1$ and then multiplying each time 
a derivative of $T$, of degree at most $p$,
and
a previously computed polynomial in  ${\rm PDer}_{{\rm bit}(p')}(T)$, of degree at most $(2{\rm bit}(p)-1)p$.
In this way, we compute all the polynomials
in ${\rm PDer}_{{\rm bit}(p')}(T)$ within
$O(p^{{\rm bit}(p) + 3} ) $
operations in $\D[u]$. 

Similarly, for each polynomial $Q \in {\rm Der}(\cF \setminus \{P\})$
there are at most $4p^{{\rm bit}(p)-1}$ polynomials $A \in {\rm PDer}_{{\rm bit}(p')-1}(T; Q)$ 
and we  compute all of them multiplying each time $Q$ by a previously computed polynomial in 
${\rm PDer}_{{\rm bit}(p')-1}(T) \cup {\rm PDer}_{{\rm bit}(p')-1}(T, Q)$, of degree at most $(2{\rm bit}(p)-1)p$.
In this way, we compute all the polynomials
in ${\rm PDer}_{{\rm bit}(p')-1}(T; Q)$ within
$O(p^{{\rm bit}(p) + 2} ) $
operations in $\D[u]$.

Finally, since there are at most $(s-1)p$ polynomials $Q \in {\rm Der}(\cF \setminus \{P\})$, 
the
whole step can be done within 
$O(sp^{{\rm bit}(p) + 3})$
operations in $\D[u]$. 
It is clear that all these operations are between polynomials of degree at most $2p{\rm bit}(p) \deg_u \cF \le 4p^3\deg_u \cF$.

{\it Third Step:} We have to compute 
${{\rm HMi}}(T; A)$
for every $A \in {\rm PDer}_{{\rm bit}(p')}(T)$
and also, for every $Q \in {\rm Der}(\cF \setminus \{P\})$ and every $A 
\in {\rm PDer}_{{\rm bit}(p')-1}(T; Q)$. 
By Remark \ref{rem:degree_and_number_thelim}, there are then $O(sp^{{\rm bit}(p)})$ polynomials $A$ to consider.

For a fixed $A = \sum_{0 \le h \le a} c'_h(u)y^h$ with $a \le 2p{\rm bit}(p)$,
in order to compute he matrix
$c_{p'}(u)^{a + 2p' -2}{\rm Her}(T; A)$, 
we first compute $c_{p'}(u)^{i}$ for $1 \le i \le a + 2p' - 2$ within $O(p{\rm bit}(p))$ operations in $\D[u]$.
Then, using equation (\ref{eq:rec_formula}), we can compute each entry of this matrix within $O(p)$ operations since
all the required traces have already been computed. Note that since 
$c_{p'}(u)^{a + 2p' -2}{\rm Her}(T; A)$ is a Hankel matrix, we only need to compute $2p' - 
1$ entries. 
Therefore, the computation of $c_{p'}(u)^{a + 2p' -2}{\rm Her}(T; A)$ can be done within $O(p^2)$
operations in $\D[u]$, each of them between polynomials of degree at most $(2p{\rm bit}(p) + 2p -1)\deg_u \cF$.

The last part of the step is to compute the principal minors of 
$c_{p'}(u)^{a + 2p' -2}{\rm Her}(T; A)$.
By Lemma \ref{lem:complexity_berk} this can be done 
within 
$
O(p^4)
$
operations in $\D[u]$, each of them between polynomials of degree at most 
$p(2p{\rm bit}(p) + 2p -1)\deg_u \cF$.

Finally, 
the whole step can be done within 
$
O(sp^{{\rm bit}(p) + 4})
$
operations in $\D[u]$, each of them between polynomials of degree at most $p(2p{\rm bit}(p) + 2p -1)\deg_u \cF \le 4p^3\deg_u \cF$.
\end{proof}

\section{Main result}\label{sec:main}

Let $\cF$  be a finite family in $ \D[x_1, \dots, x_k]$. 
In this section we 
define the families ${\rm Elim}_i(\cF)$  for $0 \le i \le k-1$ and we 
prove Theorem \ref{quantifier_elimination}.

The main idea is to repeatedly use the construction of ${\rm Elim}$ as in Definition 
\ref{def:sign_det_sect_three_first}, 
where for each $i = k-1, \dots, 0$ (taken in this order), the vector $u = (x_1, \dots, x_{i})$ will 
play
 the role of the set of parameters
and  $y = x_{i+1}$  will play the role of the main variable.

\begin{defn}\label{def:higher_order_eliminating_families}
We define ${\rm Elim}_{k}({\cal F})$ as ${\cal F}$.
Then, for $i = k-1, \dots, 0$, we define
inductively 
$$
{\rm Elim}_{i}({\cal F}) = {\rm Elim}({\rm Elim}_{i+1}({\cal F})) \subset \D[x_1, \dots, x_{i}].
$$
\end{defn}

\begin{proof}{Proof of Theorem \ref{quantifier_elimination}:}
The proof is based on a cylindrical structure on the realizable sign conditions 
described by the families ${\rm Elim}_i(\cF)$ for $1 \le i \le k$. 

First, we prove the existence of the quantifier free formula $\Psi$ in a constructive way.
To do so, we proceed in three steps.

The first step is to successively compute ${\rm Elim}_{k-1}(\cF), \dots,  
{\rm Elim}_{0}(\cF)$.

The second step is to successively compute $\SIGN({\rm Elim}_1(\cF)), \dots, 
\SIGN({\rm Elim}_{k}(\cF))$ together  
with some additional information that will be needed in the third step. More precisely,  
starting from  ${\rm Elim}_0(\cF)\subset \D$,
for $i=0, \dots, k-1$, we consider every $\tau \in \SIGN({\rm Elim}_i(\cF))$.   
Following the procedure described in the proof of Proposition \ref{thm:elimination_one_variable}, 
for each such $\tau$
we compute
$$
\SIGN({\rm Elim}_{i+1}(\cF)(\upsilon, x_{i+1}))
$$ 
for any 
$\upsilon \in \R^{i}$ such that 
$$\sign({\rm Elim}_i(\cF)(\upsilon)) = \tau,$$
and we keep the record that 
$\SIGN({\rm Elim}_{i+1}(\cF)(\upsilon, x_{i+1})) \subset
\SIGN({\rm Elim}_{i+1}(\cF))$ is exactly the set of realizable sign conditions 
on the family ${\rm Elim}_{i+1}(\cF)$ given the extra condition that 
$\sign({\rm Elim}_{i}(\cF)) = \tau$.

Note that these first two steps only depend on $\cF$ rather than depending on
the given first order formula
$$
{\rm Qu}_{i+1} x_{i+1} \dots {\rm Qu}_{k} x_{k} \, \Phi(x_1, \dots, x_k).
$$

The last step is to compute $\Psi$, or what is equivalent,  $T_{\Phi}$. To do so, 
we proceed by reverse induction on $i = k-1, \dots, 1$. 

For $i = k-1$, we are given 
a first order formula of type 
$$
{\rm Qu}_{k} x_{k} \, \Phi(x_1, \dots, x_k).
$$
By Proposition \ref{thm:elimination_one_variable},
for every $\upsilon = (\upsilon_1, \dots, \upsilon_{k-1}) \in \R^{k-1}$, 
the list 
${\rm SIGN}(\cF(
\upsilon, x_k))$
is determined by 
the  
realizable
sign condition 
on the family 
$$ 
{\rm Elim}_{k-1}({\mathcal F}) \subset \D[x_1, \dots, x_{k-1}]$$
satisfied by $\upsilon$. Since $\Phi(x_1, \dots,  x_k)$
is a quantifier free formula with atoms defined by polynomials in $\cF$, 
from $\SIGN({\cal F}(
\upsilon, x_k))$ it is possible to decide the truth value of the formula
$$
{\rm Qu}_k x_k \, \Phi(
\upsilon, x_k).
$$
So, we define 
$$
T_\Phi=\{\tau \in
\SIGN({\rm Elim}_{k-1}(\cF))
  \mid \forall 
\upsilon \in {\rm Real}(\tau, \R) , {\rm Qu}_k x_k \Phi(
\upsilon, x_k) \hbox{{
is true}}\}
$$
 and
$$\begin{array}{rcl}
\Psi(x_1, \dots, x_{k-1})& =& \displaystyle{\bigvee_
{\tau \in T_\Phi} \left(
\sign({\rm Elim}_{k-1}(\cF)) = \tau \right) }
\end{array}$$
and we are done.

Now, we take $1 \le i \le k-2$ and we are given 
a first order formula of type 
$$
{\rm Qu}_{i+1} x_{i+1} \, {\rm Qu}_{i+2} x_{i+2}  \dots   {\rm Qu}_{k} x_{k} \, \Phi(x_1, \dots, x_k).
$$
By the inductive hypothesis, 
there exists a quantifier free formula 
$$
\Psi'(x_1, \dots, x_{i+1})
$$ 
with atoms in 
${\rm Elim}_{i+1}(\cF)$ 
which is equivalent to 
$$
{\rm Qu}_{i+2} x_{i+2} \dots {\rm Qu}_{k} x_{k} \, \Phi(x_1, \dots, x_k).
$$
By Proposition \ref{thm:elimination_one_variable},
for every $\upsilon = (\upsilon_1, \dots, \upsilon_{i}) \in \R^{i}$, 
the list 
${\rm SIGN}({\rm Elim}_{i+1}(\cF)(
\upsilon, 
x_{i+1}
))$
is determined by 
the sign condition 
on the family 
$$ 
{\rm Elim}_{i}({\mathcal F}) \subset \D[x_1, \dots, x_{i}]$$
satisfied by $\upsilon$.
Since $\Psi'(x_1, \dots,  x_{i+1})$
is a quantifier free formula with atoms defined by polynomials in ${\rm Elim}_{i+1}(\cF)$, 
from $\SIGN({\rm Elim}_{i+1}(\cF)(
\upsilon,
 x_{i+1}
))$ 
it is possible to decide
the truth value of the formula
$$
{\rm Qu}_{i+1}x_{i+1
}  \, \Psi'(
\upsilon, x_{i+1}).
$$
Finally, we define  
$$T_{\Phi}=\{\tau \in 
\SIGN({\rm Elim}_i(\cF))
 \mid \forall 
\upsilon \in {\rm Real}(\tau, \R) , {\rm Qu}_{i+1
} x_{i+1
} \Psi'(
\upsilon,x_{i+1}) \hbox{{
 is true}}\}
$$ 
and
$$\begin{array}{rcl}
\Psi(x_1, \dots, x_{i})& = &
\displaystyle{\bigvee_{\tau \in T_{\Phi}}
\left( \sign({\rm Elim}_{i}(\cF)) = \tau \right) }
\end{array}$$
and we are done.

We now consider the quantitative part of the theorem.
First, using Lemma \ref{lem:degree_and_number_elim}, it can be easily proved by 
reverse induction that
for $i = k, \dots, 1$, for every $P \in  {\rm Elim}_{i}(\cF)$,  
$$
\deg P \le 4^{\frac{4^{k-i}-1}{3}}d^{4^{k-i}}.
$$

We  prove then, again using Lemma \ref{lem:degree_and_number_elim} 
and by reverse induction,
that for $i = k, \dots, 1$, 
$$
\# {\rm Elim}_{i}(\cF) \le s^{2^{k-i}}\max\{2, d\}^{(16^{k-i}-1){\rm bit}(d)}. 
$$
Indeed, $\# {\rm Elim}_{k}(\cF) = s$ and for $i = k-1, \dots, 1$, 
\begin{eqnarray*}
\# {\rm Elim}_{i}(\cF) &\le& 4 
s^{2^{k-i}} \max\{2, d\}^{2 (16^{k-i-1}-1){\rm bit}(d)}
(4^{\frac{4^{k-i-1}-1}{3}}d^{4^{k-i-1}})^{({\rm bit}(4^{\frac{4^{k-i-1}-1}{3}}d^{4^{k-i-1}}) + 2)} 
\le \\
&\le&
s^{2^{k-i}}\max\{2, d\}^{2 + 2(16^{k-i-1}-1){\rm bit}(d) + (2\frac{4^{k-i-1}-1}{3} + 4^{k-i-1})(2\frac{4^{k-i-1}-1}{3} + 4^{k-i-1}{\rm bit}(d) + 2)} 
\le \\
&\le& 
s^{2^{k-i}}\max\{2, d\}^{(16^{k-i}-1){\rm bit}(d)}. 
\end{eqnarray*}

Finally, we analyze the complexity of computing the quantifier free formula $\Psi$ following the procedure
we explained before.

Since the second and third step take only sign evaluation in $\D$ and operations in $\mathbb{Q}$, 
we only need to bound the complexity of the first step. 

One more time using Lemma \ref{lem:degree_and_number_elim}, for $1 \le i \le k-1$,  the computation of 
${\rm Elim}_{i}(\cF)$ from ${\rm Elim}_{i+1}(\cF)$  can be done within
$$
O\left(
s^{2^{k-i}} \max\{2, d\}^{2 (16^{k-i-1}-1){\rm bit}(d)}
(4^{\frac{4^{k-i-1}-1}{3}}d^{4^{k-i-1}})^{({\rm bit}(4^{\frac{4^{k-i-1}-1}{3}}d^{4^{k-i-1}}) + 5)} 
\right)
$$
operations in $\D[x_1, \dots, x_i]$ between polynomials of degree at most 
$$
4^{\frac{4^{k-i}-1}{3}}d^{4^{k-i}}.
$$
Taking into account that each of these operations can be done within 
$$
O\left(
4^{i2\frac{4^{k-i}-1}{3}}d^{i2\cdot 4^{k-i}}
\right)
$$
operations in $\D$ and 
$$\begin{array}{ll}
& s^{2^{k-i}} \max\{2, d\}^{2 (16^{k-i-1}-1){\rm bit}(d)}
(4^{\frac{4^{k-i-1}-1}{3}}d^{4^{k-i-1}})^{({\rm bit}(4^{\frac{4^{k-i-1}-1}{3}}d^{4^{k-i-1}}) + 5)} 
4^{i2\frac{4^{k-i}-1}{3}}d^{i2\cdot 4^{k-i}}
\le \\
\le &
s^{2^{k-i}}\max\{2, d\}^{2(16^{k-i-1}-1){\rm bit}(d) + (2\frac{4^{k-i-1}-1}{3} + 4^{k-i-1})(2\frac{4^{k-i-1}-1}{3} + 4^{k-i-1}{\rm bit}(d) + 5)
+ i4\frac{4^{k-i}-1}{3} + i2\cdot 4^{k-i}
} 
\le \\
\le &
s^{2^{k-i}}\max\{2, d\}^{{\rm bit}(d)(16^{k-i} + i 4^{k-i+1})}
,
\end{array}
$$
the computation of 
${\rm Elim}_{i}(\cF)$ from ${\rm Elim}_{i+1}(\cF)$  can be done within
$$
O\left(
s^{2^{k-i}}\max\{2, d\}^{{\rm bit}(d)(16^{k-i} + i 4^{k-i+1})}
\right)
$$
operations in $\D$.

On the other hand, similarly, the computation of 
${\rm Elim}_{0}(\cF)$ from ${\rm Elim}_{1}(\cF)$  can be done within
$$
O\left(
s^{2^{k}} \max\{2, d\}^{2 (16^{k-1}-1){\rm bit}(d)}
(4^{\frac{4^{k-1}-1}{3}}d^{4^{k-1}})^{({\rm bit}(4^{\frac{4^{k-1}-1}{3}}d^{4^{k-1}}) + 5)} 
\right) \le 
O\left(
s^{2^{k}} \max\{2, d\}^{16^{k}}
\right)
$$
operations in $\D$.

Finally, since 
$$
\displaystyle{\sum_{i = 1}^{k-1}s^{2^{k-i}}\max\{2, d\}^{{\rm bit}(d)(16^{k-i} + i 4^{k-i+1})}}
\le 
2\displaystyle{s^{2^{k}}\max\{2, d\}^{{\rm bit}(d)(16^{k} + (k-1) 4^{k+1})}},
$$
the complexity of the first step is 
$$
O \Big(
s^{2^{k}}\max\{2, d\}^{{\rm bit}(d)(16^{k} + (k-1) 4^{k+1})}
\Big)
$$
operations in $\D$.
\end{proof}

\begin{remark} \label{purelyalgebraic}
Note that the proof of correctness of the quantifier elimination method described in Theorem  is entirely based on Proposition \ref{thm:elimination_one_variable} and is thus completely algebraic.
\end{remark}

Note also that when the number of variables $k$ is fixed the complexity of our method is polynomial in the number $s$ of the polynomials, but is not polynomial  in the degree $d$ of the polynomials.
On the other hand, the complexity of the Cylindrical Algebraic Decomposition \cite{Col}  is polynomial in $s$ and $d$ when $k$ is fixed (see \cite[Chapter 11]{BPRbook}) .

\bigskip

\textbf{Acknowledgments:} We are very grateful to the referee for his/her relevant and useful suggestions and remarks.


\begin{thebibliography}{00}

\bibitem{BKR}
Ben-Or M., Kozen D. and  Reif J.
{\em The complexity of elementary algebra and geometry.}
J. of Computer and Systems Sciences 18, 251--264 (1986)

\bibitem{BPRQE}
Basu S., Pollack R. and  Roy M.-F. {\em On the
Combinatorial and Algebraic Complexity of Quantifier Elimination.}
 Journal of the A. C. M. 43, 1002--1045, (1996).

\bibitem{BPRbook} Basu S., Pollack R. and Roy M.-F.
{\em Algorithms in real algebraic geometry}, volume~10 of
{Algorithms and Computation in Mathematics}, Second edition.
Springer-Verlag, Berlin, 2006.


\bibitem{BPR3}  Basu S., Pollack R. and Roy M.-F.
{\em Algorithms in real algebraic geometry}, volume~10 of
{Algorithms and Computation in Mathematics}, New online version avialable at 
\texttt{ https://perso.univ-rennes1.fr/marie-francoise.roy/bpr-ed2-posted3.html}

\bibitem{Berk} Berkowitz S. 
{\em On computing the determinant in small parallel time using a small number of processors. } 
Inf. Process. Lett. 18, 147--150 (1984).

\bibitem{Canny93b} Canny J., 
{\em Improved Algorithms for Sign Determination and Existential Quantifier Elimination}, 
Comput. J. 36 (5), {409--418}, {1993}
          
\bibitem{coq} \texttt{https://coq.inria.fr/}

\bibitem{Coh} Cohen P. J. {\em Decision procedures for real and p-adic 
fields.} Comm. in Pure and Applied Math. 22,  131--151 (1969). 

\bibitem{Assia_Cyril}  Cohen C. and Mahboubi A. 
{\em Formal proofs in real algebraic geometry: from ordered fields to quantifier elimination.} 
Log. Methods Comput. Sci. 8 no. 1, 1:02 (2012).

\bibitem{Col} Collins G. {\em Quantifier Elimination for real closed 
fields by cylindric algebraic decomposition.} Second GI Conference 
on Automata Theory and Formal Languages. LNCS vol 33, 134--183, 
Springer-Verlag, Berlin (1975).

\bibitem{CR} Coste M. and Roy M.-F. {\em Thom's lemma, the coding of real 
algebraic numbers and the computation of the topology 
of semi-algebraic sets.} J. Symbolic Computation, 121--129 (1988).


\bibitem{CLO} Cox D., Little, J. and O'Shea, D. 
{\em Ideals, varieties, and algorithms. 
An introduction to computational algebraic geometry and commutative algebra}, 
Undergraduate Texts in Mathematics, third edition.  Springer, New York, 2007. 


\bibitem{DH} Davenport J.~H. and Heintz J. {\em Real
quantifier elimination is doubly exponential}. {Journal of
Symbolic Computation} 5, 29--35 (1988).


\bibitem{Gri88} Grigoriev D. {}{\em Complexity of deciding
Tarski algebra}. {}{J. Symbolic Comput.} 5, 65--108
(1988).

\bibitem{GV}  Grigoriev D. and Vorobjov N.
{}{\it Solving systems of polynomial inequalities in subexponential time}.
{}{J. Symbolic Comput.} 5, 37--64 (1988).


\bibitem{Hor} H\"ormander L. {\em The analysis of linear partial 
differential operators}, Berlin, Heidelberg, 364--367, Springer, New-York, 
1983. 


\bibitem{Loj} Lojasiewicz S. {\it Ensembles
semi-analytiques.} Institut des Hautes Etudes Scientifiques, 1965.

\bibitem{zerelem} Lombardi H., Perrucci D. and Roy M.-F. {\em An elementary recursive bound for effective Positivstellensatz and
Hilbert 17-th problem}. Manuscript, 2014.

\bibitem{Mo} Monk L.~G.
{\em Elementary-recursive decision procedures}. PhD
thesis, UC Berkeley, 1975.

\bibitem{R92} Renegar  J. {\em On the computational
complexity and geometry of the first-order theory of the reals. I-III.}
{}{J. Symbolic Comput.} 13, 255--352 (1992).


\bibitem{Tarski51} Tarski A. {\em A decision
method for elementary algebra and geometry}, Second Edition. University of
California Press, Berkeley and Los Angeles, California, 1951. 


\bibitem{Seidenberg54} Seidenberg A. {\em A new decision
method for elementary algebra}. {}{Annals of Mathematics}
60, 365--374 (1954).



\end{thebibliography}
\end{document}